\theoremstyle{plain}
\newtheorem{theorem}{Theorem}
\newtheorem{lemma}[theorem]{Lemma}
\newtheorem{proposition}[theorem]{Proposition}
\theoremstyle{definition}
\theoremstyle{remark}
\author{Kyle Pratt}
\title{The irrationality of a divisor function series of Erd\H{o}s and Kac}
\address{All Souls College, Oxford OX1 4AL, UK}
\email{kyle.pratt@all-souls.ox.ac.uk}
\subjclass[2010]{11J72}
\keywords{divisor function, Erd\H{o}s, irrational, sieve methods, exponential sums}
\begin{document}
\date{}

\maketitle

\begin{abstract}
For positive integers $k$ and $n$ let $\sigma_k(n)$ denote the sum of the $k$th powers of the divisors of $n$. Erd\H{o}s and Kac asked whether, for every $k$, the number $\alpha_k = \sum_{n\geq 1} \frac{\sigma_k(n)}{n!}$ is irrational. It is known unconditionally that $\alpha_k$ is irrational if $k\leq 3$. We prove $\alpha_4$ is irrational.
\end{abstract}

\section{Introduction}

For positive integers $k$ and $n$ define $\sigma_k(n) = \sum_{d\mid n} d^k$, the sum of the $k$th powers of the divisors of $n$. Erd\H{o}s and Kac \cite{EK1954} conjectured that the number
\begin{align*}
\alpha_k := \sum_{n\geq 1} \frac{\sigma_k(n)}{n!}
\end{align*}
is irrational for every positive $k$. The irrationality of $\alpha_1$ and $\alpha_2$ is not so difficult to prove (see \cite{Erd1952,EK1954}), but it is more difficult to prove the irrationality of $\alpha_k$ for $k\geq 3$. Schlage-Puchta \cite{SP2006} and Friedlander, Luca, and Stoiciu \cite{FLS2007} independently proved the irrationality of $\alpha_3$ with arguments relying on sieve methods.

It was also proved in \cite{FLS2007, SP2006} that the irrationality of $\alpha_k$ follows in general from difficult conjectures in number theory, either Schinzel's Hypothesis $H$ \cite[Theorem]{SP2006} or an appropriate version of the Hardy-Littlewood prime $k$-tuples conjecture \cite[Theorem 2]{FLS2007}. Deajim and Siksek \cite[Theorem 2]{AS2011} gave a criterion (also conditional on Schinzel's Hypothesis $H$) for the set $\{1,\alpha_1,\ldots,\alpha_r\}$ to be linearly independent over $\mathbb{Q}$, and confirmed the criterion holds for $r=50$.

We prove unconditionally that $\alpha_4$ is irrational.
\begin{theorem}\label{thm:main theorem}
The number
\begin{align*}
\alpha_4 = \sum_{n\geq 1} \frac{\sigma_4(n)}{n!}=42.30104\ldots
\end{align*}
is irrational.
\end{theorem}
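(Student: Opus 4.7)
The plan is to follow the broad strategy used for $\alpha_3$ by Friedlander--Luca--Stoiciu and Schlage-Puchta, adapted to the new difficulty that for $k=4$ the natural tail no longer tends to zero. Assume for contradiction that $\alpha_4 = a/b$ with $b\geq 1$ a positive integer, so that $b\cdot n!\,\alpha_4 \in \Z$ for every $n\geq 1$. Splitting the defining series at $n$ gives
\[
n!\,\alpha_4 \;=\; \underbrace{\sum_{m=1}^{n} \sigma_4(m)\,\frac{n!}{m!}}_{\in\,\Z} \;+\; T_n, \qquad T_n \;:=\; \sum_{j\geq 1} \frac{\sigma_4(n+j)}{(n+1)(n+2)\cdots(n+j)},
\]
so that $T_n$ must itself be a rational with denominator dividing $b$; it suffices to exhibit a single $n$ for which this fails.

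The obstacle new to $k=4$ is that $T_n$ grows: since $n+1$ divides itself we have $\sigma_4(n+1)/(n+1)\geq (n+1)^3$, so $T_n \asymp n^3$. Hence one cannot make $T_n$ itself small; one must instead compute $T_n$ modulo $1$ to precision better than $1/b$. Truncating at $j\leq J$ for a fixed $J$ (say $J=5$) yields $T_n = R_n + O(n^{3-J})$ with
\[
R_n := \sum_{j=1}^{J} \frac{\sigma_4(n+j)}{(n+1)(n+2)\cdots(n+j)},
\]
and for $n$ large the truncation error sits well below $1/(2b)$. Each $R_n$ is an explicit rational whose denominator divides $(n+1)\cdots(n+J)$, with fractional part determined by the factorizations of $n+1,\ldots,n+J$.

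To exhibit $R_n$ whose reduced denominator does not divide $b$, I would select $n$ from a sieve set for which $n+1$ is a prime exceeding $b$, while $n+2,\ldots,n+J$ have prescribed (almost-prime) factorizations so that each $\sigma_4(n+j)$ can be written out explicitly. Clearing common denominators and reducing modulo $p := n+1$, the requirement that $p$ divide the reduced denominator of $R_n$ becomes an explicit non-vanishing statement for a polynomial expression in $\sigma_4(n+2),\ldots,\sigma_4(n+J)$ modulo $p$. A suitable combination of sieve weights together with standard bounds for exponential or character sums should then produce infinitely many such $n$ on which this non-vanishing holds, yielding the contradiction.

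The main obstacle is precisely this sieve construction: jointly controlling the factorizations of several consecutive shifted variables is a problem of sieve dimension roughly $J$, well beyond the reach of a direct linear sieve. This is where, as the paper's keywords indicate, exponential sums enter --- presumably through Type~I/II decompositions together with level-of-distribution estimates for primes in arithmetic progressions to varying moduli --- to execute the sieve unconditionally. I expect the algebraic reduction to $R_n$ and the identification of the offending prime to be comparatively clean, with the bulk of the paper's technical effort residing in the sieve-plus-exponential-sum construction of the required $n$.
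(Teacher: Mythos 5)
Your opening reduction is exactly the paper's: assume $\alpha_4=a/b$, take $n+1=p$ a large prime so that $n!\,\alpha_4\in\Z$, and conclude that the truncated tail must be (essentially) an integer; in your indexing the terms with $j\geq 5$ are $O(1/p)$, so only the shifts $p,p+1,p+2,p+3$ matter. The divergence, and the genuine gap, lies in how you propose to contradict integrality. You want to prescribe the factorizations of $n+2,\dots,n+J$ and deduce that $p$ divides the reduced denominator of $R_n$, i.e.\ a non-vanishing condition modulo $p$ on the $\sigma_4(n+j)$. But $\sigma_4(n+j)\bmod p$ is not determined by a sieve-type condition: if, say, $n+2=q_1q_2$, then $\sigma_4(n+2)\equiv 2+q_1^4+q_2^4 \pmod{p}$ and the residues $q_i^4\bmod p$ are completely uncontrolled, so your scheme effectively requires $n+2,\dots,n+J$ to be genuinely prime (or of totally rigid multiplicative shape) simultaneously. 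That is a prime-tuples requirement --- precisely why the known proofs of irrationality of $\alpha_k$ for general $k$ are conditional on Schinzel's Hypothesis H --- and, as you yourself observe, a sieve of dimension $J$ cannot deliver it unconditionally; the appeal to ``exponential or character sums'' at that point does not repair it, since no level-of-distribution input is known that produces several simultaneous prime values.

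The paper's escape is structurally different and is the idea your proposal is missing. Only $p+2$ and $(p+3)/2$ are sieved, keeping the sieve essentially two-dimensional: $p+2$ is forced to be squarefree with no prime factor below $x^{1/4-\epsilon}$ (so $\sigma_4(p+2)/(p+2)^3$ is an integer plus an explicitly computable perturbation, with a separate case when $p+2$ has a factor near $x^{1/4}$), and $(p+3)/2$ is forced to have no small prime factors (pinning $\sigma_4(p+3)/(p+3)^4$ near $17/16$). The term $\sigma_4(p+1)/(p(p+1))$ is \emph{not} controlled arithmetically at all. Instead one extracts a prime factor $q$ of convenient size from $p+1=mq$, writes $\sigma_4(p+1)/(p+1)^2=\bigl(\sigma_4(m)/m^2\bigr)(q^2+q^{-2})$, and shows by Weyl--van der Corput estimates that this quantity equidistributes modulo $1$ as $q$ varies, hence only rarely lies in the tiny window forced by integrality; comparing this upper bound with a vector-sieve lower bound for the number of admissible $p$ (via Bombieri--Vinogradov, including its smooth-number analogue) yields the contradiction. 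Without this equidistribution mechanism, and without restricting the arithmetic control to $p+2$ and $p+3$ only, your plan collapses to the conditional argument and does not prove the theorem.
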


Our proof of Theorem \ref{thm:main theorem}, which is based on sieve methods, pushes the techniques to the limit and new ideas seem necessary to prove the irrationality of $\alpha_k$ for $k\geq 5$.

It is natural to place the irrationality of $\alpha_k$ in the context of $E$-functions, which are generalizations of the exponential function. Siegel \cite[p. 33]{Sieg1949} defined an $E$-function $f(z)$ to be a function of the form
\begin{align*}
f(z) = \sum_{n=0}^\infty \frac{a_n}{n!} z^n,
\end{align*}
where the $a_n$ are algebraic numbers in a fixed algebraic number field, for any fixed $\epsilon > 0$ we have $|a_n| = O_\epsilon(n^{n\epsilon})$, and there exists a sequence $q_n$ of positive integers with $q_n = O_\epsilon(n^{n\epsilon})$ such that $q_n a_n$ is an algebraic integer. Observe that $f(z)$ is an entire function.

Nowadays it is common to require an $E$-function to satisfy additionally a linear differential equation over $\overline{\mathbb{Q}}(z)$ (e.g. \cite{AR2018}). If an $E$-function $f(z)$ satisfies a first-order linear differential equation, then the powerful Siegel-Shidlovski theorem (see \cite[Chapter 3]{Shid1989}) implies $f(\alpha)$ is transcendental for any algebraic $\alpha$ which is not in some explicit finite set (depending on $f$). If $f$ is the solution of a linear differential equation of higher order then the transcendence results are not quite as strong, but are still in a relatively satisfactory state \cite{AR2018}.

In light of this, it seems natural to study the entire functions
\begin{align*}
f_k(z) = \sum_{n=0}^\infty \frac{\sigma_k(n)}{n!} z^n.
\end{align*}
The conjecture of Erd\H{o}s and Kac is then that $f_k(1)$ is irrational for every positive integer $k$. However, the functions $f_k(z)$ do not appear to satisfy any suitable differential equations which are susceptible to the Siegel-Shidlovskii technique. If we define the more general functions
\begin{align*}
f_{k,\ell}(z) = \sum_{n=0}^\infty \frac{\sigma_k(n) n^\ell}{n!} z^n
\end{align*}
then we do have the obvious differential equations
\begin{align*}
\frac{d}{dz}f_{k,\ell}(z) = z^{-1}(f_{k,\ell+1}(z)-1),
\end{align*}
but since the index $\ell$ increases without bound under repeated differentiation these functions are also unsuitable. Even so, one expects
\begin{align*}
f_k(\alpha) = \sum_{k=0}^\infty \frac{\sigma_k(n)}{n!} \alpha^n
\end{align*}
to be transcendental for any non-zero algebraic number $\alpha$ and positive integer $k$, but this is far out of reach of present techniques. We therefore interpret the Erd\H{o}s-Kac conjecture as pointing towards irrationality and transcendence results for $E$-functions which lack differential structure.

Siegel also introduced $G$-functions, which generalize the geometric series as $E$-functions generalize the exponential function (see e.g. \cite{ChCh1985} for more on $G$-functions). A $G$-function $f(z)$ is a power series of the form $\sum_{n=0}^\infty a_n z^n$ where the algebraic numbers $a_n$ have at most exponential growth, and the denominators of the $a_n$ also do not grow too quickly. As with $E$-functions, it is common to assume some differential structure as well. The theory of irrationality and transcendence for $G$-functions is much less satisfactory than that for $E$-functions. 

Many other irrationality conjectures of Erd\H{o}s fit comfortably within the context of $G$-functions. For instance, Erd\H{o}s conjectured \cite{Erd1986} that the numbers
\begin{align*}
\sum_{n\geq 1} \frac{\sigma_1(n)}{2^n}, \ \ \ \ \ \sum_{n\geq 1} \frac{\varphi(n)}{2^n}, \ \ \ \ \ \ \sum_{n\geq 1} \frac{\omega(n)}{2^n}
\end{align*}
are irrational, where $\varphi(n)$ is the Euler totient function and $\omega(n)$ is the number of distinct prime divisors of $n$. At present, only the first of these numbers is known to be irrational, which is a corollary of deep work of Nesterenko \cite[Theorem 1]{Nes1996} on the algebraic independence of values of Eisenstein series. Thus, the irrationality conjectures of Erd\H{o}s, while often appearing \emph{ad hoc} at first glance, hint at deeper mathematical problems.

In section \ref{sec:notation} we collect the notation and sieve results we use in the paper; this should be skipped on a first reading and then consulted as necessary. In section \ref{sec:outline of proof} we give the proof of Theorem \ref{thm:main theorem} assuming five propositions. One of the propositions is proved in section \ref{sec:outline of proof}, and the other propositions are proved in later sections. We give a brief outline of the remaining sections of the paper at the end of section \ref{sec:outline of proof}.

\section{Notation and background}\label{sec:notation}

In this section we describe the notation in force throughout the paper, and we state the basic sieve results that form the backbone of our work.

We write $\epsilon>0$ for a sufficiently small constant which is fixed throughout the paper; probably taking $\epsilon = 10^{-100}$ would suffice. The real number $x$ is always sufficiently large depending on every other fixed quantity such as $\epsilon$ or $\alpha_4$. We define
\begin{align}\label{eq:defn of D0}
D_0 = \frac{1}{2}\log \log x
\end{align}
and
\begin{align}\label{eq:defn of W}
W &= 12 \prod_{5\leq p\leq D_0} p.
\end{align}

We write $f \ll g, g \gg f$, or $f = O(g)$ if there is a constant $C>0$ such that $|f| \leq C |g|$. If $f = O(g)$ and $g = O(f)$ then we write $f \asymp g$. If the implied constant depends on some other quantity or parameter we often denote this via a subscript, such as $f \ll_\epsilon g$.

For $\theta \in \mathbb{R}$ we write $\| \theta \| = \min_{n \in \mathbb{Z}} |\theta - n|$. Given $x \in \mathbb{R}$ we define $e(x) = e^{2\pi i x}$. For a condition $C(n)$, we write $\mathbf{1}_{C(n)}$ for the indicator function of this event. The cardinality of a finite set $S$ is written as $|S|$.

The letters $p,q,r,t$ are reserved exclusively for prime numbers, while $\ell,m,n$ are integers.

We write $m \mid n$ if $m$ divides $n$ and $m \nmid n$ if $m$ does not divide $n$. We write $\varphi(n)$ for the Euler totient or phi function, $\tau(n)$ for the number of divisors of $n$, and $\mu(n)$ for the M\"obius function. The number of distinct prime factors of $n$ is $\omega(n)$. The least prime divisor of $n$ is $P^-(n)$. The greatest common divisor of two integers $m$ and $n$ is $(m,n)$. 

We often write congruences $n\equiv a \pmod{Q}$ as $n\equiv a (Q)$.

For a real $z\geq 2$ we write $P(z) = \prod_{p\leq z} p$ and for $2\leq w < z$ we write $P(w,z) = \prod_{w<p\leq z} p$.

A positive integer $n$ is $y$-smooth if every prime factor $p$ of $n$ satisfies $p\leq y$. The number of $y$-smooth integers $\leq x$ is denoted by $\Psi(x,y)$. We write $\rho(u)$ for the Dickman-de Bruijn function. The function $\rho(u)$ is differentiable and satisfies the differential delay equation
\begin{align}\label{eq:dickman rho differential delay}
u\rho'(u) + \rho(u-1) = 0
\end{align}
with the initial condition $\rho(u)=1$ for $0\leq u \leq 1$ (see \cite[(1.5)-(1.6)]{HT1993}).

Our arguments rely heavily on sieve methods. An important tool is the linear sieve, which we use in a sharp form due to Rosser-Iwaniec. The following convenient formulation is essentially \cite[Lemma 2.1]{LWC2019} (see also \cite[Theorem 11.12]{FI2010} and \cite[p. 235]{FI2010}).

\begin{lemma}[Linear sieve]\label{lem:linear sieve}
Let $D\geq 2$ and $L>1$. Let $\mathscr{P}$ denote a set of primes, let $z\geq 2$ and write $P(z) = \prod_{\substack{p\leq z \\ p \in \mathscr{P}}}p$. There exist two sequences of real numbers $\{\lambda_d^\pm\}_{d=1}^\infty$, vanishing for $d>D$ or $\mu(d)=0$, satisfying $\lambda_1^\pm = 1$, $|\lambda_d^\pm|\leq 1$,
\begin{align}\label{eq:linear sieve inequalities}
\sum_{d\mid n}\lambda_d^-\leq \sum_{d\mid n} \mu(d)\leq \sum_{d\mid n}\lambda_d^+,
\end{align}
and such that
\begin{align*}
\sum_{d\mid P(z)}\lambda_d^+ \frac{w(d)}{d}&\leq\Big\{F(s) + O_L\Big((\log D)^{-1/6} \Big)\Big\} \prod_{\substack{p\leq z \\ p \in \mathscr{P}}}\left(1 - \frac{w(p)}{p}\right), \\
\sum_{d\mid P(z)}\lambda_d^- \frac{w(d)}{d}&\geq\Big\{f(s) - O_L\Big((\log D)^{-1/6} \Big)\Big\} \prod_{\substack{p\leq z \\ p \in \mathscr{P}}}\left(1 - \frac{w(p)}{p}\right)
\end{align*}
uniformly for all multiplicative functions $w$ satisfying $0 < w(p) < p, p \in \mathscr{P}$, and
\begin{align*}
\prod_{\substack{u < p \leq v \\ p \in \mathscr{P}}}\Big(1 - \frac{w(p)}{p} \Big)^{-1} \leq \frac{\log v}{\log u} \left(1 + \frac{L}{\log u}\right), \ \ \ \ \ 2\leq u \leq v \leq z.
\end{align*}
Here $s = \frac{\log D}{\log z}$, and $F(s), f(s)$ are given by the continuous solutions to the system
\begin{align*}
\begin{cases}
sF(s) = 2e^\gamma, \ \ \ \ \ &1\leq s \leq 3, \\
sf(s) = 0, &0\leq s \leq 2, \\
(sF(s))' = f(s-1), &s>3, \\
(sf(s))' = F(s-1), &s>2.
\end{cases}
\end{align*}
\end{lemma}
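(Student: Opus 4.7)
The plan is to establish this via the Rosser--Iwaniec construction of the dimension-one $\beta$-sieve. First, I would define the weights $\lambda_d^\pm$ explicitly as a combinatorial truncation of $\mu(d)$. For squarefree $d = p_1 p_2 \cdots p_r$ with $p_1 > p_2 > \cdots > p_r$ (all primes from $\mathscr{P}$), set $\lambda_d^+ = \mu(d)$ precisely when the partial products $p_1 \cdots p_{k-1} p_k^{3}$ are all $<D$ for every $k \leq r$ in an appropriate parity class, and $\lambda_d^+ = 0$ otherwise; define $\lambda_d^-$ analogously but with the opposite parity class. The exponent $3$ is tuned to dimension one. The requirements $\lambda_1^\pm = 1$, $|\lambda_d^\pm| \leq 1$, and the support conditions ($d \leq D$ and $\mu(d) \neq 0$) are then immediate from the definition.

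Next, I would establish the pointwise inequalities \eqref{eq:linear sieve inequalities} by induction on $\omega(n)$. Splitting $\sum_{d \mid n}\lambda_d^\pm$ according to whether the largest prime factor of $n$ divides $d$, the parity toggle built into the Rosser truncation condition allows one to reduce to the inductive hypothesis applied to proper divisors of $n$. Combined with the M\"obius identity $\sum_{d \mid n}\mu(d) = \mathbf{1}_{n=1}$, this yields the one-sided bounds.

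The main step is the asymptotic evaluation of $V^{\pm}(z) := \sum_{d \mid P(z)}\lambda_d^\pm w(d)/d$ in terms of $V(z) := \prod_{p \leq z,\, p \in \mathscr{P}}(1-w(p)/p)$. Organizing $V^\pm(z)$ by the largest prime factor of $d$, the multiplicativity of $w$ together with the Rosser truncation yields a Buchstab-type recursion expressing $V^\pm(z)$ in terms of integrals of $V^\mp$ against $w(p)/p$ over primes $p \leq z$, with integration range constrained by the truncation parameter $D$. Under the Mertens-type hypothesis on $w$, the discrete recursion is approximated by a continuous integral equation whose solutions are exactly the functions $F$ and $f$ defined by the delay system in the statement. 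Writing $s = \log D/\log z$ then yields $V^{\pm}(z)/V(z) = F(s) + O(\cdot)$ and $V^-(z)/V(z) = f(s) - O(\cdot)$.

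The main obstacle is extracting the sharp error term $O_L((\log D)^{-1/6})$. This requires a careful quantitative comparison between the discrete Buchstab recursion and its integral analogue, tracking how the Mertens-type error $L/\log u$ compounds through each iteration, and exploiting the regularity of $F$ and $f$ away from the boundary values $s=2$ and $s=3$ where the initial conditions take effect. The exponent $1/6$ reflects a delicate balance between the number of permitted iterations of the recursion and the loss incurred at each step; achieving it is the technical core of the Rosser--Iwaniec theorem, and I would ultimately follow the treatment in \cite[Chapter 11]{FI2010} rather than reproduce it from scratch.
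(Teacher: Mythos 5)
The paper does not prove this lemma; it is imported verbatim (``essentially \cite[Lemma 2.1]{LWC2019}, see also \cite[Theorem 11.12]{FI2010}'') and treated as a black box throughout. There is therefore no ``paper proof'' to compare against. Your sketch correctly identifies the Rosser--Iwaniec construction that underlies those references: the parity-dependent truncation with exponent $\beta+1=3$ (correct for dimension one, where the sifting limit is $\beta=2$), the elementary verification of $\lambda_1^\pm=1$, $|\lambda_d^\pm|\leq 1$ and of the one-sided inequalities \eqref{eq:linear sieve inequalities} by a parity argument, and the Buchstab-type recursion by largest prime factor that, when compared against its continuous analogue, produces the delay-differential system for $F$ and $f$. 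You also rightly flag that the quantitative error term is the technical core and defer it to \cite[Chapter 11]{FI2010}, which is exactly what the paper itself does.

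Two small points worth tightening if you were to write this out. First, the support condition should explicitly restrict $d$ to divisors of $P(z)$ (i.e.\ squarefree with all prime factors from $\mathscr{P}$ and at most $z$), not merely require $d<D$; your truncation condition enforces $d<D$ but the restriction to $\mathscr{P}$ and to primes $\leq z$ must be built in as well. Second, ``in an appropriate parity class'' should be made precise: for $\lambda_d^+$ the condition $p_1\cdots p_{k-1}p_k^{3}<D$ is imposed at the odd indices $k$, and for $\lambda_d^-$ at the even indices $k$; this parity choice is exactly what drives the one-sided inequalities in your induction and the $F\leftrightarrow f$ alternation in the Buchstab recursion. With those clarifications your outline matches the cited construction.
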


We also use the fundamental lemma of the sieve to accurately sift small primes \cite[Section 6.5]{FI2010}. All our applications will be small variations on the inequalities
\begin{align}\label{eq:fundamental lemma}
\sum_{\substack{d\mid P((\log x)^{100}) \\ d \mid n \\ \omega(d) \leq 2R+1}}\mu(d)\leq \mathbf{1}_{(n,P((\log x)^{100}))=1} \leq \sum_{\substack{d\mid P((\log x)^{100}) \\ d \mid n \\ \omega(d) \leq 2R}}\mu(d),
\end{align}
where $R$ is any positive integer \cite[(6.6)]{FI2010}.

We refer the reader to the standard source \cite{FI2010} for additional information about sieves.

\section{Outline of the proof}\label{sec:outline of proof}

Our method has some similarities to the proofs of irrationality of $\alpha_3$ \cite{FLS2007,SP2006}, but is much more complicated. In particular, a sieve method lies at the heart of our approach, but the finishing blow is provided by an appeal to the theory of exponential sums.

In this section we prove Theorem \ref{thm:main theorem}, assuming five propositions. The following proposition is the foundation of all our later work.

\begin{proposition}[Rationality implies near-integrality]\label{prop:special p implies close to integer}
Assume that $\alpha_4$ is rational. Let $\epsilon > 0$ be a sufficiently small constant. Let $x$ be sufficiently large depending on $\alpha_4$ and $\epsilon$, and assume $p \in (x/2,x]$ is a prime such that 
\begin{itemize}
\item $p+2$ is squarefree, all the prime factors of $p+2$ are $> x^{1/4-\epsilon}$, and $p+2$ has at most one prime factor in the interval $(x^{1/4-\epsilon} , x^{1/4}(\log x)^{100}]$,
\item $\frac{p+3}{2}$ has no prime factors $\leq (\log x)^{100}$.
\end{itemize}
If $p+2$ has no prime factor $\leq x^{1/4}(\log x)^{100}$ then
\begin{align}\label{eq:p+1 frac part with no small prime factors}
\Big\|\frac{\sigma_4(p+1)}{p(p+1)} + \frac{1}{16} \Big\| \leq (\log x)^{-100}.
\end{align}
If $p+2$ has a prime factor $r$ in the interval $(x^{1/4-\epsilon} , x^{1/4}(\log x)^{100}]$, then
\begin{align}\label{eq:p+1 frac part with one small prime factor}
\Big\|\frac{\sigma_4(p+1)}{p(p+1)} +\frac{p+1}{r^4}+ \frac{1}{16} \Big\| \leq (\log x)^{-100}.
\end{align}

\end{proposition}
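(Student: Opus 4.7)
The plan is to use the hypothesized rationality $\alpha_4 = A/B$ to manufacture an integer, and then expand it term by term to read off the near-integrality of $\sigma_4(p+1)/(p(p+1))$. For $x$ large enough that $B \mid (p-1)!$, the number $(p-1)!\alpha_4 = A(p-1)!/B$ is an integer, and since each term in $\sum_{n=1}^{p-1}\sigma_4(n)(p-1)!/n!$ is manifestly an integer, so is the tail
\begin{equation*}
T := \sum_{n \geq p}\frac{\sigma_4(n)(p-1)!}{n!} = \sum_{k \geq 0} S_k, \qquad S_k := \frac{\sigma_4(p+k)}{p(p+1)\cdots(p+k)}.
\end{equation*}
Thus $\|T\| = 0$, and everything reduces to computing $\sum_k S_k \pmod{1}$ with total error $O((\log x)^{-100})$.

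The contributions from $k = 0$, $k = 3$, and $k \geq 4$ are straightforward. Since $p$ is prime, $S_0 = p^3 + 1/p$, contributing $\|S_0\| = 1/p = O(x^{-1})$. For $k \geq 4$, the crude bound $\sigma_4(p+k) \ll x^{4+\epsilon}$ gives $S_k \ll x^{3+\epsilon-k}$ with total contribution $O(x^{-1+\epsilon})$. For $k = 3$, write $p+3 = 2m$; the hypothesis that $m$ is coprime to $P((\log x)^{100})$, combined with the identity $\sigma_4(m)/m^4 = \sum_{d \mid m}d^{-4}$ and the uniform bound $\sum_{d \geq (\log x)^{100}}d^{-4} = O((\log x)^{-300})$, gives $\sigma_4(m)/m^4 = 1 + O((\log x)^{-300})$. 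Together with $(p+3)^3/(p(p+1)(p+2)) = 1 + O(1/p)$, this yields $S_3 = 17/16 + O((\log x)^{-100})$.

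The substantial work is the analysis of $S_2$. Decompose $p+2 = rs$, taking $r$ to be the at most one prime factor of $p+2$ lying in $(x^{1/4-\epsilon}, x^{1/4}(\log x)^{100}]$ if it exists, and $r = 1$ otherwise; the hypotheses on $p+2$ then force $s$ to be squarefree with every prime factor $> x^{1/4}(\log x)^{100}$ (and hence $\omega(s) \leq 3$, since $s < x^{3/4+\epsilon}$). Multiplicativity of $\sigma_4$ gives
\begin{equation*}
\frac{\sigma_4(p+2)}{p+2} = \Big(r^3 + \tfrac{1}{r}\Big)\frac{\sigma_4(s)}{s}, \qquad \frac{\sigma_4(s)}{s^4} = 1 + O\Big(\frac{1}{x(\log x)^{400}}\Big),
\end{equation*}
and expanding, while using $rs = p+2 \leq x$ to bound cross errors, yields $\sigma_4(p+2)/(p+2) = (p+2)^3 + (p+2)^3/r^4 + O(x^2/(\log x)^{400})$. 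Dividing by $p(p+1)$ and using the polynomial identity $(p+2)^3 = (p+5)p(p+1) + (7p+8)$, we get $S_2 = (p+5) + (p+5)/r^4 + O((\log x)^{-100})$. In Case 1 ($r = 1$) this is an integer plus $O((\log x)^{-100})$; in Case 2, the identity $(p+5)/r^4 - (p+1)/r^4 = 4/r^4 = O(x^{-(1-4\epsilon)})$ lets us replace $(p+5)/r^4$ by $(p+1)/r^4$ modulo $1$ with negligible error.

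Summing everything modulo $1$, the integrality $\|T\| = 0$ becomes
\begin{equation*}
\Big\|\frac{\sigma_4(p+1)}{p(p+1)} + \mathbf{1}_{\text{Case 2}}\cdot \frac{p+1}{r^4} + \frac{1}{16}\Big\| = O((\log x)^{-100}),
\end{equation*}
which is the proposition. The main obstacle is the Case 2 analysis of $S_2$: one has to check that the multiplicative decomposition $\sigma_4(rs) = (1+r^4)\sigma_4(s)$, combined with the fact that the prime factors of $s$ all exceed $x^{1/4}(\log x)^{100}$, produces precisely the correction $(p+1)/r^4$ modulo $1$ once the polynomial errors from $(p+2)^3/(p(p+1))$ and the $4/r^4$ discrepancy are absorbed into the $(\log x)^{-100}$ error term.
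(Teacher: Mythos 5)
Your proposal is correct and follows essentially the same route as the paper: convert the rational tail into an integer, peel off the $j=0$, $j=3$, and $j\ge 4$ terms, and extract the $(p+1)/r^4$ correction from the $p+2$ term via multiplicativity of $\sigma_4$ and the largeness of the prime factors of $(p+2)/r$ (your exact polynomial identity $(p+2)^3=(p+5)p(p+1)+(7p+8)$ replaces the paper's expansion of $1/(p(p+1)(p+2))$ in powers of $(p+2)^{-1}$, a cosmetic difference). One trivial slip: the uniform factor $r^3+1/r$ is wrong when $r=1$, since $\sigma_4(1)=1$ rather than $2$, but this perturbs $S_2$ by an integer and so is harmless modulo $1$.
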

\begin{proof}
If $\alpha_4$ is rational then $\alpha_4 = a/b$ for some positive integers $a$ and $b$.  For a prime $p$ as in the statement of the proposition, consider
\begin{align*}
N=(p-1)!\sum_{n=p}^\infty \frac{\sigma_4(n)}{n!} = (p-1)! \Big(\alpha_4 - \sum_{n=1}^{p-1} \frac{\sigma_4(n)}{n!} \Big) = (p-1)!\frac{a}{b} - \sum_{n=1}^{p-1} \sigma_4(n)\frac{(p-1)!}{n!}.
\end{align*}
Since $p\asymp x$ is sufficiently large the number on the right-hand side is an integer, and therefore $N$ is a (manifestly positive) integer.

We rewrite $N$ as
\begin{align*}
N &= \frac{\sigma_4(p)}{p} + \frac{\sigma_4(p+1)}{p(p+1)} + \frac{\sigma_4(p+2)}{p(p+1)(p+2)} + \frac{\sigma_4(p+3)}{p(p+1)(p+2)(p+3)} \\ 
&+ \sum_{j=4}^\infty \frac{\sigma_4(p+j)}{p(p+1)\cdots(p+j)}.
\end{align*}
Since $\sigma_4(m) = \sum_{d\mid m} d^4 = m^4 \sum_{d\mid m} d^{-4} \leq m^4 \sum_{d\geq 1} d^{-4} \ll m^4$ we see that the sum over $j$ has size $\ll p^{-1} \ll x^{-1}$. Since $p$ is a prime
\begin{align*}
\frac{\sigma_4(p)}{p} = \frac{p^4 + 1}{p} = p^3 + p^{-1} = p^3 + O(x^{-1}),
\end{align*}
and it follows that
\begin{align*}
\Big\|\frac{\sigma_4(p+1)}{p(p+1)} + \frac{\sigma_4(p+2)}{p(p+1)(p+2)} + \frac{\sigma_4(p+3)}{p(p+1)(p+2)(p+3)} \Big\| \ll x^{-1}.
\end{align*}

The term involving $p+3$ is also straightforward. By multiplicativity and easy estimation, we have
\begin{align*}
\frac{\sigma_4(p+3)}{p(p+1)(p+2)(p+3)} &= (1+O(x^{-1}))\frac{\sigma_4(2)}{2^4} \frac{\sigma_4(\frac{p+3}{2})}{(\frac{p+3}{2})^4} = \frac{17}{16} + O((\log x)^{-200}),
\end{align*}
the second equality following from the fact that $\frac{p+3}{2}$ has no prime factors $\leq (\log x)^{100}$.

Next we turn to the term involving $p+2$. Note that
\begin{align*}
\frac{\sigma_4(p+2)}{p(p+1)(p+2)} &= \frac{\sigma_4(p+2)}{(p+2)^3} + 3\frac{\sigma_4(p+2)}{(p+2)^4} + O(p^{-1}) = \frac{\sigma_4(p+2)}{(p+2)^3} + 3 + O(x^{-1/4+\epsilon}),
\end{align*}
since $p+2$ has no prime divisors $\leq x^{1/4-\epsilon}$. If $p+2$ is squarefree and has no prime factors $\leq x^{1/4}(\log x)^{100}$, then $p+2$ has at most three distinct prime factors and we may write $p+2 = q_1 \cdots q_k$ with $q_1 > \cdots > q_k$ and $1\leq k \leq 3$. Therefore
\begin{align*}
\frac{\sigma_4(p+2)}{(p+2)^3} &= \prod_{i=1}^k \frac{q_i^4+1}{q_i^3} = \prod_{i=1}^k \left( q_i + \frac{1}{q_i^3}\right) = q_1 \cdots q_k + O\left(\frac{q_1\cdots q_{k-1}}{q_k^3} \right).
\end{align*}
The error term is
\begin{align*}
\frac{q_1\cdots q_{k-1}}{q_k^3} = \frac{q_1 \cdots q_k}{q_k^4} = \frac{p+2}{q_k^4} \ll (\log x)^{-400}
\end{align*}
since $q_k > x^{1/4}(\log x)^{100}$. It follows that
\begin{align*}
\Big\|\frac{\sigma_4(p+1)}{p(p+1)} + \frac{1}{16} \Big\| \leq (\log x)^{-100}
\end{align*}
if $p+2$ has no prime factor $\leq x^{1/4}(\log x)^{100}$.

Now assume there exists a prime divisor $r$ of $p+2$ in $(x^{1/4-\epsilon}, x^{1/4}(\log x)^{100}]$. Since $p+2$ is squarefree and has at most one prime divisor $\leq x^{1/4}(\log x)^{100}$, we may write $p+2 = q_1 \cdots q_k r$ where $q_1 > \cdots > q_k > r$ and $q_k > x^{1/4}(\log x)^{100}$. Then
\begin{align*}
\frac{\sigma_4(p+2)}{(p+2)^3} &= \left(r + \frac{1}{r^3}\right)\prod_{i=1}^k \left( q_i + \frac{1}{q_i^3}\right) = q_1 \cdots q_k r + \frac{q_1 \cdots q_k}{r^3} + O\left(\frac{q_1 \cdots q_{k-1}r}{q_k^3} \right) \\
&= p+2 + \frac{p+2}{r^4} + O((\log x)^{-400}).
\end{align*}
This yields the claim of the proposition when $p+2$ has a prime divisor $\leq x^{1/4}(\log x)^{100}$.
\end{proof}

The overall strategy of the proof of Theorem \ref{thm:main theorem}, then, is to assume for contradiction that $\alpha_4$ is rational and then show there exists a prime $p$ satisfying the hypotheses of Proposition \ref{prop:special p implies close to integer} such that neither \eqref{eq:p+1 frac part with no small prime factors} nor \eqref{eq:p+1 frac part with one small prime factor} is satisfied.

Ideally, we would like to impose in Proposition \ref{prop:special p implies close to integer} the condition that $p+2$ has no prime divisors $\leq x^{1/4+\epsilon}$, say, since then we would not need to bifurcate into cases. However, showing the existence of primes $p$ such that $p+2$ has no prime factors $\leq x^{1/4+\epsilon}$ brushes up against the sifting limits of the linear sieve; it is not currently possible to prove the existence of such primes without using strong results on the distribution of primes in arithmetic progressions. Some results are available \cite[Theorem 10]{BFI1986}, but these require a substantial amount of mathematical technology such as the well-factorable form of the linear sieve weights \cite{Iwa1980} and estimates for sums of Kloosterman sums coming from the spectral theory of automorphic forms \cite{DI1982}.

We prefer to give a proof which requires less firepower, and therefore we use a more standard result on primes in arithmetic progressions, namely the Bombieri-Vinogradov theorem \cite[Chapter 28]{Dav2000}. Using the Bombieri-Vinogradov theorem and some sieve theory arguments allows us to find many primes $p$ satisfying the hypotheses of Proposition \ref{prop:special p implies close to integer}.

For technical reasons it is convenient to count primes $p\asymp x$ in a residue class modulo a slowly growing integer $W$ (see \eqref{eq:defn of W}) which is divisible by all the primes up to $D_0$ (see \eqref{eq:defn of D0}). This is the so-called ``$W$-trick,'' which can be a useful technical device in some sieving contexts (see e.g. \cite{May2015}). We note that by the prime number theorem we have the bounds $(\log x)^{1/3} \leq W \leq (\log x)^{2/3}$. The $W$-trick ensures that $p,p+2$, and $p+3$ have no very small prime factors, other than the ones they are ``forced'' to have (for instance, $p+3$ is divisible by 2, and at least one of $n,n+1,n+2,n+3$ must be divisible by 3).

\begin{proposition}[Existence of many special primes]\label{prop:lower bound on number of good primes}
Let $\epsilon>0$ be sufficiently small and fixed, and let $x$ be sufficiently large depending on $\epsilon$. Let $\mathcal{S}$ denote the set of primes $p \in (x/2,x]$ satisfying
\begin{itemize}
\item $p\equiv -1 \pmod{W}$,
\item $p+2$ is squarefree, all the prime factors of $p+2$ are $> x^{1/4-\epsilon}$, and $p+2$ has at most one prime factor in the interval $(x^{1/4-\epsilon} , x^{1/4}(\log x)^{100}]$,
\item $\frac{p+3}{2}$ has no prime factors $\leq (\log x)^{100}$.
\end{itemize}
Then
\begin{align*}
|\mathcal{S}| &\geq 30 \epsilon \cdot e^{-\gamma}\frac{W^2}{\varphi(W)^3} \frac{x}{2(\log x)^2 \log ((\log x)^{100})}.
\end{align*}
\end{proposition}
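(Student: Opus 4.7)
The plan is to apply the Rosser--Iwaniec linear sieve (Lemma~\ref{lem:linear sieve}) to $p+2$ together with the Bombieri--Vinogradov theorem, handle the small sift on $(p+3)/2$ by the fundamental lemma \eqref{eq:fundamental lemma}, and enforce the ``at most one prime factor in $(x^{1/4-\epsilon},\,x^{1/4}(\log x)^{100}]$'' condition by a Richert-type weighted sieve. Let $\mathcal{A} = \{p \in (x/2,x] : p\text{ prime},\ p\equiv -1 \pmod W\}$, so that $|\mathcal{A}| \sim x/(2\varphi(W)\log x)$ by the prime number theorem in arithmetic progressions. To sift $p+2$ by primes $q$ with $D_0 < q \le z_1 := x^{1/4-\epsilon}$, take $w(q)/q = 1/(q-1)$ (the condition $q\mid p+2$ kills one coprime residue class mod $q$) and sieve level $D = x^{1/2}/(\log x)^B$ for $B$ large; Bombieri--Vinogradov then bounds $\sum_{d\le D/W}|E(x;Wd,\cdot)|$. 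The sieve parameter is $s = \log D/\log z_1 = 2 + 8\epsilon + o(1)$, and $f(s) = (2 e^\gamma/s)\log(s-1) \sim 8\epsilon e^\gamma$ for small $\epsilon$.

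The condition on $(p+3)/2$ is a small sift: applying \eqref{eq:fundamental lemma} to the AP condition $p\equiv -3 \pmod q$ over $q\in(D_0,(\log x)^{100}]$ gives a further multiplicative factor $V_2 = \prod_{D_0<q\le(\log x)^{100}}(1-1/(q-1))$ with negligible error, since the relevant moduli stay far below the Bombieri--Vinogradov level. Combining with the linear sieve yields, for the set of $p\in\mathcal{A}$ satisfying both sieved conditions, a lower bound of the form $(f(s)-o(1))\,V_1 V_2\,|\mathcal{A}|$, where $V_1 = \prod_{D_0 < q\le z_1}(1-1/(q-1))$. By Mertens' theorem, $V_1 \sim \log D_0/\log z_1$, $V_2 \sim \log D_0/\log((\log x)^{100})$, and $W/\varphi(W) \sim e^\gamma \log D_0$. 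Substituting these recasts the bound into the shape $c\,\epsilon e^{-\gamma}(W^2/\varphi(W)^3)\,x/(2(\log x)^2 \log((\log x)^{100}))$; in particular, the factor $W^2/\varphi(W)^3 = (W/\varphi(W))^2/\varphi(W) \sim e^{2\gamma}(\log D_0)^2/\varphi(W)$ arises naturally from $V_1 V_2$.

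To enforce that $p+2$ have at most one prime factor in $(z_1,z_2]$, where $z_2:=x^{1/4}(\log x)^{100}$, apply the weight $\mathfrak{w}(n) = 1 - \tfrac{1}{2}\sum_{z_1 < r\le z_2,\, r\mid n} 1$, which satisfies $\mathfrak{w}(n) \le \mathbf{1}_{\omega_{(z_1,z_2]}(n)\le 1}$. Thus $\sum_{p\in\mathcal{A}}\mathfrak{w}(p+2)\cdot(\text{other sieve indicators})$ is a lower bound for $|\mathcal{S}|$. The first piece is the main term above; each term $S_r$ in the subtracted sum is bounded using the linear sieve \emph{upper} bound $\lambda^+$ applied to $m = (p+2)/r$. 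For $r = x^t$ with $t \in (1/4-\epsilon,\,1/4+o(1)]$ the parameter becomes $s_r = (1/2-t-o(1))/(1/4-\epsilon) \in [1,\,1+8\epsilon]$, giving $F(s_r) = 2e^\gamma/s_r$. Summing via the Mertens estimate $\sum_{z_1<r\le z_2}1/(r-1)\sim 4\epsilon$ produces a correction strictly smaller than the main term, leaving the desired positive lower bound. The squarefreeness of $p+2$ is an $O(x/z_1) = O(x^{3/4+\epsilon})$ correction, easily absorbed into the error.

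The main obstacle is quantitative: the sieve is pushed exactly to the parity barrier $s=2$, where $f(s) = 0$, so the main term is already only of order $\epsilon$, and the weighted-sieve correction is of the same order. Producing the explicit constant $30\epsilon\cdot e^{-\gamma}$ therefore requires careful tracking of the Mertens products (especially how $V_1V_2$, via $e^{2\gamma}(\log D_0)^2$, packages into $W^2/\varphi(W)^3$) and of the ratio between $f(s)$ and $\tfrac{1}{2}\sum_r F(s_r)/\varphi(r)$. Any slack has to be squeezed out by choosing the weighted-sieve parameter $\kappa$ and the sifting ranges optimally; matching the explicit form $\frac{x}{2(\log x)^2\log((\log x)^{100})}$ on the right-hand side is then a straightforward rearrangement using $\log((\log x)^{100}) = 100\log\log x$.
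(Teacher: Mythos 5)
Your overall framework (lower-bound linear sieve on $p+2$ at level $\approx x^{1/2}$ via Bombieri--Vinogradov, fundamental lemma for $(p+3)/2$, and the computation $s=2/(1-4\epsilon)$, $f(s)\approx 8\epsilon e^\gamma$) matches the paper, and your bookkeeping of how $V_1V_2$ packages into $W^2/\varphi(W)^3$ is correct. But there is a genuine quantitative gap in how you enforce ``at most one prime factor of $p+2$ in $(z_1,z_2]$.'' Your Richert weight $\mathfrak{w}(n)=1-\tfrac12\sum_{z_1<r\le z_2,\,r\mid n}1$ is a valid minorant of the indicator, but it is far too lossy here: the subtracted term is $\tfrac12\sum_r S_r$ with $S_r\lesssim F(s_r)\varphi(r)^{-1}V_1V_2|\mathcal{A}|$ and $F(s_r)\approx 2e^\gamma$, so after $\sum_r 1/r\sim 4\epsilon$ the correction is $\approx 4\epsilon e^\gamma V_1V_2|\mathcal{A}|$ --- exactly half of the main term $8\epsilon e^\gamma V_1V_2|\mathcal{A}|$. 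In the proposition's normalization this leaves you with roughly $16\epsilon$ in place of $32\epsilon$, and no choice of the linear weight parameter $\kappa$ can do better, since $1-\kappa k\le\mathbf{1}_{k\le 1}$ forces $\kappa\ge 1/2$. The constant matters: the endgame of the paper needs $|\mathcal{S}|\ge 30\epsilon(\cdots)$ to beat the bound $\Sigma_3\le 26\epsilon(\cdots)$ of Proposition~\ref{prop:sigma 3 not too big}, so a bound of order $16\epsilon$ destroys the whole argument. The fix, which is what the paper does, is to use the quadratic Bonferroni minorant $\mathbf{1}_{k\le 1}\ge 1-\binom{k}{2}$, i.e.\ subtract only the count of $p$ with $qr\mid p+2$ for \emph{pairs} of primes $z_1<r<q\le z_2$; the resulting double sum carries $\sum\sum_{r<q}1/(qr)\ll\epsilon^2$ and is therefore negligible against the $\epsilon$-sized main term (this is Lemma~\ref{lem:two prime factors close together}).

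A secondary point you gloss over: a lower-bound linear sieve cannot simply be ``multiplied'' by a second sieved condition, since the weights $\sum_{d\mid n}\lambda_d^-$ take negative values. The paper combines the two sifts on $p+2$ and $(p+3)/2$ via the vector sieve inequality (Lemma~\ref{lem:vector sieve inequality}); because the $(p+3)/2$ sift is a fundamental-lemma sift whose upper and lower bounds agree up to negligible error, this does recover the product $f(s)V_1V_2$ you assert, but the step needs to be justified rather than asserted.
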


An important consequence of Proposition \ref{prop:special p implies close to integer} is that if $\alpha_4$ is rational then $|\mathcal{S}| \leq \Sigma_1 + \Sigma_2$, where
\begin{align}\label{eq:defn of Sigma 1}
\Sigma_1 &:= \sum_{\substack{x/2 < p \leq x \\ p\equiv -1 (W) \\ \|\frac{\sigma_4(p+1)}{p(p+1)} + \frac{1}{16} \| \leq (\log x)^{-100}}} \mathbf{1}_{P^-(p+2) > x^{1/4}(\log x)^{100}}\cdot \mathbf{1}_{P^-(\frac{p+3}{2}) > (\log x)^{100}},
\end{align}
and
\begin{align*}
\Sigma_2 &:= \sum_{x^{1/4-\epsilon} < r \leq x^{1/4}(\log x)^{100}}\sum_{\substack{x/2 < p \leq x \\ p\equiv -1 (W) \\ p+2 \equiv 0 (r) \\ \|\frac{\sigma_4(p+1)}{p(p+1)} + \frac{p+1}{r^4} + \frac{1}{16} \| \leq (\log x)^{-100}}} \mathbf{1}_{P^-(\frac{p+2}{r}) > x^{1/4}(\log x)^{100}}\cdot \mathbf{1}_{P^-(\frac{p+3}{2}) > (\log x)^{100}},
\end{align*}
where the sum over $r$ in $\Sigma_2$ is over primes. Let us first sketch how we treat the simpler sum $\Sigma_1$. The key point is that, for almost all $p$, the number $p+1$ will have a prime factor $q$ in a convenient range (imagine $q \approx x^\epsilon$ for the purposes of this outline). If $p+1$ has a prime factor $q$ of convenient size, then we can factor $p+1 = \ell q$ and write
\begin{align*}
\frac{\sigma_4(p+1)}{p(p+1)} \approx \frac{\sigma_4(p+1)}{(p+1)^2} =\frac{\sigma_4(\ell)\sigma_4(q)}{\ell^2 q^2} = \frac{\sigma_4(\ell)}{\ell^2} (q^2 + q^{-2}),
\end{align*}
at least in the typical case where $\ell$ and $q$ are coprime. We can then control condition \eqref{eq:p+1 frac part with no small prime factors} by bounding exponential sums of the form
\begin{align*}
\sum_{q \asymp Q}e\left(A \left( q^2 + q^{-2}\right) \right),
\end{align*}
for some quantity $A$ which is much larger than $Q$ and which is independent of $q$. Since the phase function is a smooth function of $q$, these exponential sums are susceptible to the classical methods of Weyl-van der Corput. Making this argument precise yields the following proposition.

\begin{proposition}[$\Sigma_1$ is small]\label{prop:sigma 1 is small}
Let $\Sigma_1$ be defined as in \eqref{eq:defn of Sigma 1}. Let $\epsilon>0$ be sufficiently small and fixed, and let $x$ be sufficiently large depending on $\epsilon$. Then
\begin{align*}
\Sigma_1 &\ll \epsilon^2\frac{W^2}{\varphi(W)^3} \frac{x}{(\log x)^2 \log\log x},
\end{align*}
where the implied constant is absolute.
\end{proposition}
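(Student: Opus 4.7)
The plan is to detect the near-integer constraint on $\sigma_4(p+1)/(p(p+1))$ via Fourier analysis, factor $p+1 = \ell q$ for a moderately sized prime factor $q$, and then bound the resulting smooth exponential sum in $q$ by classical Weyl/van der Corput methods.

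First, I would partition the primes $p$ counted by $\Sigma_1$ by picking a prime factor $q$ of $p+1$ in a range $(Q_0, Q_1]$ to be optimized. Primes $p$ without such a factor of $p+1$ are handled by a separate, direct sieve bound. For the remaining primes, write $p+1 = \ell q$; since $p+2$ and $(p+3)/2$ have no small prime factors by hypothesis, and $q\ll x^{1/2}$, we have $(\ell, q)=1$ generically. By multiplicativity of $\sigma_4$ and the formula $\sigma_4(q)=q^4+1$,
\[
\frac{\sigma_4(p+1)}{p(p+1)} = \frac{\sigma_4(\ell)}{\ell^2}\,(q^2 + q^{-2}) + O(x^{-1}),
\]
so the near-integer condition becomes $\|A_\ell(q^2+q^{-2}) + 1/16\| \leq \delta$ with $A_\ell := \sigma_4(\ell)/\ell^2$ and $\delta := (\log x)^{-100}$. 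A Beurling-Selberg majorant $\psi \geq \mathbf{1}_{\|\cdot\|\leq \delta}$ of degree $H \asymp \delta^{-1}$ then reduces the problem, after absorbing the trivial $h=0$ contribution of size $\ll \delta$ times a sieve bound, to estimating
\[
\sum_{1 \leq |h| \leq H}\frac{1}{H}\Bigl|\sum_{Q_0<q\leq Q_1}\sum_{\ell} w_\ell\, e\bigl(hA_\ell(q^2+q^{-2})\bigr)\Bigr|,
\]
where the $w_\ell$ are upper-bound sieve weights (e.g.\ Rosser-Iwaniec) used to drop the primality of $p=\ell q-1$ and to encode the residual conditions on $p+2$ and $(p+3)/2$.

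The core estimate is then to bound, for each fixed $\ell$ and $h$, the oscillatory sum $\sum_q e(h\phi_\ell(q))$ with $\phi_\ell(q) = A_\ell(q^2+q^{-2})$. Here $A_\ell \asymp \ell^2 \asymp (x/Q_0)^2$ is much larger than $Q_0$, and the phase is smooth in $q$ with $\phi_\ell^{(k)}(q) \asymp A_\ell/q^{k-2}$ for $k\geq 2$. I would apply van der Corput's $k$-th derivative test for a suitable $k$ (the third derivative test being the first which is useful here, requiring $Q_0\gg x^{2/7+\epsilon}$); this produces a power saving over the trivial bound $Q_1-Q_0$ in the inner $q$-sum. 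Summing over $\ell$ with the sieve weight $w_\ell$, then over $1\leq |h|\leq H$, and invoking Bombieri-Vinogradov to handle the primality of $p=\ell q-1$ on average over $\ell$, then yields the claimed bound on $\Sigma_1$.

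The main obstacle is the joint optimization of the parameter $Q_0$: it must be large enough for the van der Corput estimate to give substantial saving, yet small enough that the set of primes $p$ with no prime factor of $p+1$ in the chosen range remains controllable. The desired $\epsilon^2(\log\log x)^{-1}$ saving arises from combining a modest saving from the sieve (primes $p$ with $p+1$ avoiding a short range have a density decreasing like $\epsilon$) with a power saving from the exponential sum; matching these two dictates the admissible window for $Q_0$. A secondary technical issue is controlling the exceptional $\ell$ for which $A_\ell=\sigma_4(\ell)/\ell^2$ happens to have an unusually small effective denominator, which would invalidate the Weyl estimate; these I would handle by a further averaging argument exploiting the multiplicative structure of $\sigma_4$ after the $W$-trick (which forces $(p+1)/W$ to typically have no very small prime factors), together with a Cauchy-Schwarz or large-sieve bound over $\ell$.
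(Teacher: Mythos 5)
Your overall strategy matches the paper's: split off the primes $p$ for which $p+1$ lacks a prime factor in a convenient window, handle those by an upper-bound sieve, and for the rest factor $p+1=mq$, Fourier-expand the $\|\cdot\|$-condition, and estimate the resulting exponential sum in $q$ by van der Corput methods. However, there is a genuine gap in the exponential-sum step, and it is entangled with an error in the choice of window.

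The $\epsilon^2$ in the target bound must come entirely from the sieve factor $\prod_{Q_0 < p \le Q_1}(1-1/p) \asymp \log Q_0/\log Q_1$ for the primes $p$ with no factor of $p+1$ in $(Q_0,Q_1]$; the exponential sum only needs to save an arbitrary power of $\log x$, which is tiny by comparison. To make $\log Q_0/\log Q_1 \ll \epsilon^2$ you are forced to take $(Q_0,Q_1]$ to be something like $(x^{\epsilon^3},x^\epsilon]$, as the paper does. Your proposal instead wants $Q_0 \gg x^{2/7+\epsilon}$ so that the third derivative test applies directly to the phase $A_\ell(q^2+q^{-2})$; with any $Q_1 \le x^{1/2}$ that choice gives $\log Q_0/\log Q_1 \gg 1$, which is far too large. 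Conversely, with the correct small window $Q \asymp x^{\epsilon^3}$, $A_\ell \asymp (x/Q)^2$ is a huge power of $Q$ and no direct $k$-th derivative test gives anything: the derivatives $\phi_\ell^{(k)}(q) \asymp A_\ell/q^{k+2}$ (for $k\ge 3$) remain $\gg 1$ until $k$ is enormous, and the quadratic part $A_\ell q^2$ makes the second derivative useless. The missing idea is the paper's Lemma \ref{lem:sigma1 exp sum}: apply \emph{two rounds of Weyl--van der Corput differencing} with tiny (logarithmic) differencing lengths $K,L$ to annihilate the polynomial part $A q^2 + B q$ of the phase exactly. What remains is a genuinely smooth phase $f_\ell(n) \asymp A k\ell/n^4$ of controlled size, to which \cite[Theorem 8.4]{IK2004} applies uniformly for all $Q$ with $Q^5 \le A \le Q^U$. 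Without this differencing step, the proof does not go through in the range of $Q$ that the sieve requires.

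Two further issues are worth flagging. First, once the convenient factor $q$ of $p+1$ is extracted, the paper simply drops the primality of $p$ and the side conditions on $p+2,(p+3)/2$ by positivity before going to exponential sums; there is no place for Bombieri--Vinogradov or for retaining sieve weights $w_\ell$ inside the oscillatory sum, and trying to do so would not obviously work (the inner sum is an exponential sum, not a count of primes in progressions). Second, your worry about ``exceptional $\ell$'' for which $A_\ell = \sigma_4(\ell)/\ell^2$ has a small effective denominator is a red herring once the Weyl differencing is in place: the differenced phase is smooth with derivatives bounded above and below purely in terms of $|A_\ell|$, $|k\ell|$, and $Q$, so the van der Corput estimate holds uniformly in $\ell$ with no Diophantine input needed.
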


It remains to treat $\Sigma_2$. The argument here is more complicated, but the basic idea is still to introduce a conveniently-sized prime factor of $p+1$ and reduce to exponential sum estimates. In order to make room for the congruence condition to modulus $r \approx x^{1/4}$ and to still have room left over for inserting sieve weights, we want $p+1$ to have a prime factor greater than $x^{3/10}$, say. That is, we do not want $p+1$ to be $x^{3/10}$-smooth. We therefore split
\begin{align*}
\Sigma_2 \leq \Sigma_3 + \Sigma_4,
\end{align*}
where
\begin{align}\label{eq:defn of Sigma 3}
\Sigma_3 &:=\sum_{x^{1/4-\epsilon} < r \leq x^{1/4}(\log x)^{100}}\sum_{\substack{x/2 < p \leq x \\ p\equiv -1 (W) \\ p+2 \equiv 0 (r) \\ p+1 \text{ is } x^{3/10}\text{-smooth}}} \mathbf{1}_{P^-(p+2) > x^{1/4-\epsilon}}\cdot \mathbf{1}_{P^-(\frac{p+3}{2}) > (\log x)^{100}}
\end{align}
and
\begin{align}\label{eq:defn of Sigma 4}
\Sigma_4 &:= \sum_{x^{1/4-\epsilon} < r \leq x^{1/4}(\log x)^{100}}\sum_{\substack{x/2 < p \leq x \\ p\equiv -1 (W) \\ p+2 \equiv 0 (r) \\ p+1 \text{ not } x^{3/10}\text{-smooth} \\ \|\frac{\sigma_4(p+1)}{p(p+1)} + \frac{p+1}{r^4} + \frac{1}{16} \| \leq (\log x)^{-100}}} \mathbf{1}_{P^-(p+2) > x^{1/4-\epsilon}}\cdot \mathbf{1}_{P^-(\frac{p+3}{2}) >(\log x)^{100}}.
\end{align}
In later sections we prove the following propositions.

\begin{proposition}[Upper bound for $\Sigma_3$]\label{prop:sigma 3 not too big}
Let $\Sigma_3$ be defined as in \eqref{eq:defn of Sigma 3}. If $\epsilon>0$ is sufficiently small and $x$ is sufficiently large depending on $\epsilon$, then
\begin{align*}
\Sigma_3 &\leq 26 \epsilon \cdot e^{-\gamma}\frac{W^2}{\varphi(W)^3} \frac{x}{2(\log x)^2 \log ((\log x)^{100})}.
\end{align*}
\end{proposition}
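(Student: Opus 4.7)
The strategy exploits the sparsity of smooth numbers: the density of $x^{3/10}$-smooth integers is $\rho(10/3)$, where $\rho$ denotes the Dickman function and $\rho(10/3) \approx 0.024$, a tiny constant. Heuristically $\Sigma_3 \approx \rho(10/3)|\mathcal{S}|$, far below the required $26\epsilon(\cdots)$, so we need only a loose upper bound. The plan is to drop the primality of $p$ in favor of a sieve, reducing $\Sigma_3$ to a count of smooth numbers in arithmetic progressions.

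Substituting $m = p+1$, set
$$\mathcal{A}_r := \bigl\{m \in (x/2,x]: m\ \text{is}\ x^{3/10}\text{-smooth},\ m \equiv 0 \pmod{W},\ m \equiv -1 \pmod{r}\bigr\}.$$
Since $m-1 > x^{1/2}$, the condition ``$p = m-1$ is prime'' is equivalent to $(m-1, P(x^{1/2})) = 1$, so $\Sigma_3$ is at most
$$\sum_r \#\bigl\{m \in \mathcal{A}_r:\ (m-1, P(x^{1/2}))=1,\ P^-(m+1) > x^{1/4-\epsilon},\ P^-((m+2)/2) > (\log x)^{100}\bigr\}.$$
For any prime $p$ coprime to $2Wr$, the forbidden residues for $m \pmod{p}$ arising from these three conditions are $\{1,-1,-2\}$ when $p \leq (\log x)^{100}$, $\{1,-1\}$ when $(\log x)^{100} < p \leq x^{1/4-\epsilon}$, and $\{1\}$ when $x^{1/4-\epsilon} < p \leq x^{1/2}$. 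Apply the fundamental lemma \eqref{eq:fundamental lemma} in the smallest range (sieve dimension $3$), a two-dimensional upper-bound sieve (Selberg or Rosser--Iwaniec) in the middle range, and the linear sieve (Lemma \ref{lem:linear sieve}) in the largest range. By Mertens' theorem the joint sieve density satisfies
$$V \asymp \frac{(\log D_0)^3}{(\log x)^2 \log((\log x)^{100})} \asymp \frac{(\log\log\log x)^3}{(\log x)^2 \log\log x}.$$

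For $|\mathcal{A}_r|$, write $m = Wm'$ to reduce to counting $x^{3/10}$-smooth $m' \leq x/W$ in a nonzero residue class modulo $r$. Since $r \leq x^{1/4}(\log x)^{100}$ lies well below the uniformity range for smooth-number distribution at smoothness $y = x^{3/10}$, standard results on smooth numbers in arithmetic progressions give $|\mathcal{A}_r| \sim c\,\rho(10/3)\,x/(W\varphi(r))$, and analogous equidistribution persists after further restriction to the AP modulo any sieve modulus $d \leq x^{1/2}$ (combined modulus $Wrd \leq x^{3/4+o(1)}$, still of the form $y^{O(1)}$). Combining with the sieve, summing via $\sum_{x^{1/4-\epsilon} < r \leq x^{1/4}(\log x)^{100}} 1/\varphi(r) \sim 4\epsilon$, and rewriting using $W/\varphi(W) \sim e^{\gamma}\log\log\log x$ yields
$$\Sigma_3 \leq C\,\rho(10/3)\,\epsilon\cdot e^{-\gamma}\frac{W^2}{\varphi(W)^3}\frac{x}{(\log x)^2 \log\log x}$$
for some absolute constant $C$. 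Since $\rho(10/3)$ is very small, $C\rho(10/3)$ is comfortably below $13/100$, yielding the stated bound.

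The main technical obstacle is establishing equidistribution of $x^{3/10}$-smooth numbers in arithmetic progressions uniformly to moduli as large as $x^{3/4+o(1)}$, which requires smooth-number results in APs somewhat beyond the Bombieri-Vinogradov theorem used elsewhere in the paper. Tracking the explicit constant $26$ requires care with the multi-dimensional sieve constants (e.g., the $F(s), f(s)$ of Lemma \ref{lem:linear sieve}, a two-dimensional analog, and the twin-prime-type Mertens products), but enjoys significant slack because $\rho(10/3)$ is small.
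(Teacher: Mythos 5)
The key numerical input (the smallness of $\rho(10/3)$) and the overall strategy (replace the prime $p$ by a sieve and count smooth numbers in arithmetic progressions) are right, but your proposal has a genuine gap in the level of distribution for smooth numbers, and this gap is exactly the obstacle the paper's proof is designed to circumvent.

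You propose to sieve $m-1$ up to level $x^{1/2}$ (detecting primality of $p=m-1$), sieve $m+1$ up to $x^{1/4-\epsilon}$, and combine these with the congruence $m\equiv -1\ (r)$ where $r\asymp x^{1/4}$. As you note, this leads to a combined modulus $Wrd \approx x^{3/4+o(1)}$, and you call this ``somewhat beyond the Bombieri--Vinogradov theorem.'' In fact it is far beyond anything available: the Fouvry--Tenenbaum theorem the paper invokes (\cite[Corollaire 1]{FT1996}) gives equidistribution of $y$-smooth numbers in APs on average over moduli $q\leq x^{1/2-\epsilon}$, which is BV-strength. There is no known result, ``standard'' or otherwise, giving level $x^{3/4}$ for smooth numbers uniformly in $a\ (\mathrm{mod}\ q)$, and there is no slack from $\rho(10/3)$ being small that can compensate for a missing equidistribution input. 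The paper handles this precisely by \emph{relaxing} the sieving conditions: instead of sieving $Wn-1$ and $Wn+1$ up to $x^{1/2}$ and $x^{1/4-\epsilon}$ respectively, it sieves both only up to $x^{1/8-2\epsilon}$ with linear sieve level $D=x^{1/8-\epsilon}$. Then the two sieve moduli $d_2,e$, the prime $r$, and the small-prime moduli multiply to stay below $x^{1/2-\epsilon}$, so Fouvry--Tenenbaum applies. The cost of using such truncated sieves is a constant factor blow-up: the $F(1)^2 = 4e^{2\gamma}$ and the $(1/8)^{-2}=64$ terms lead to the factor $1024$, which is then absorbed by $\rho(10/3)\leq 0.025$ since $1024\cdot 0.025 = 25.6 < 26$. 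This explicit interplay between the sieve-level losses and the smallness of $\rho(10/3)$ is the real content of the proof; without the truncation trick your sketch does not close.

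A secondary, smaller issue: your heuristic $\Sigma_3\approx\rho(10/3)|\mathcal{S}|$ tacitly assumes one can carry out a full sieve so that no constant factors beyond $\rho(10/3)$ are lost. Once the sieve levels are truncated to fit within $x^{1/2-\epsilon}$, this is no longer the case, and the proof requires tracking the explicit upper-bound sieve constants $F(s)$ against $\rho(10/3)$ rather than just ``significant slack.''
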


\begin{proposition}[$\Sigma_4$ is small]\label{prop:sigma 4 is small}
Let $\Sigma_4$ be defined as in \eqref{eq:defn of Sigma 4}. If $\epsilon>0$ is sufficiently small and $x$ is sufficiently large depending on $\epsilon$, then
\begin{align*}
\Sigma_4 &\ll \epsilon^2\frac{W^2}{\varphi(W)^3} \frac{x}{(\log x)^2 \log\log x},
\end{align*}
where the implied constant is absolute.
\end{proposition}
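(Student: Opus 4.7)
The proof follows the strategy outlined after \eqref{eq:defn of Sigma 4}. Because $p+1$ is not $x^{3/10}$-smooth, it has a largest prime factor $q > x^{3/10}$; writing $p+1 = \ell q$, we have $\ell \leq x^{7/10}$. The contribution of $p$ with $q^2\mid p+1$ is $O(x^{7/10})$ by Brun--Titchmarsh and is negligible, so we may assume $(\ell,q)=1$, and then by multiplicativity $\sigma_4(p+1) = \sigma_4(\ell)(q^4+1)$. Polynomial division of $q^4+1$ by $p(p+1) = \ell^2q^2 - \ell q$ yields the identity
\[
\frac{\sigma_4(p+1)}{p(p+1)} = \frac{\sigma_4(\ell)}{\ell^2}q^2 + \frac{\sigma_4(\ell)}{\ell^3}q + \frac{\sigma_4(\ell)}{\ell^4} + E(q,\ell),
\]
where $E(q,\ell) = \sigma_4(\ell)(q/\ell^3+1)/(\ell q(\ell q-1))$ is a smooth rational function of $q$ of rapid decay. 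Combined with $(p+1)/r^4 = \ell q/r^4$, the near-integrality condition in $\Sigma_4$ becomes $\|\Phi_{\ell,r}(q)\| \leq (\log x)^{-100}$ for a phase $\Phi_{\ell,r}$ which is quadratic in $q$ with leading coefficient $\sigma_4(\ell)/\ell^2$ and linear coefficient $\sigma_4(\ell)/\ell^3 + \ell/r^4$ (plus the harmless $E$ term).

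I then apply a Vaaler-type majorant $S(y) = \sum_{|h|\leq H}\widehat S(h)e(hy)$ of $\mathbf{1}_{\|y\|\leq(\log x)^{-100}}$, with $H = (\log x)^{200}$, $\widehat S(0) \ll (\log x)^{-100}$, and $|\widehat S(h)| \leq 1/|h|$ for $h\neq 0$. The $h=0$ term contributes $(\log x)^{-100}$ times a sieve upper bound for the unrestricted sum, which is acceptable. For each $h\neq 0$ and each prime $r \in (x^{1/4-\epsilon}, x^{1/4}(\log x)^{100}]$, the task is to bound
\[
T_{h,r} = \sum_{\ell < x^{7/10}} \sum_{q} w_{\ell,q,r}\, e(h\Phi_{\ell,r}(q)),
\]
where $w_{\ell,q,r}$ encodes primality of $q$, the Rosser--Iwaniec sieve upper bound (Lemma \ref{lem:linear sieve}) for primality of $p=\ell q-1$, the congruences $p\equiv -1\pmod W$ and $\ell q \equiv -1\pmod r$, and the sieve indicators $\mathbf{1}_{P^-((p+2)/r)>x^{1/4-\epsilon}}$ and $\mathbf{1}_{P^-((p+3)/2)>(\log x)^{100}}$ handled via the fundamental lemma \eqref{eq:fundamental lemma}.

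For each fixed $\ell, r, h$ I treat $q$ as the innermost variable and apply the Weyl--van der Corput inequality to the smooth degree-$2$ phase. The difference $h(\Phi_{\ell,r}(q+t) - \Phi_{\ell,r}(q))$ is linear in $q$ with coefficient $2h\sigma_4(\ell)t/\ell^2$, up to a negligible contribution from $E$, so the inner sum is bounded by $\min(Q, \|2h\sigma_4(\ell)t/\ell^2\|^{-1})$ with $Q\asymp x/\ell$. Summing over $|t|\leq Q$ via standard estimates for $\sum_t \min(Q,\|\alpha t\|^{-1})$ yields a power-of-log saving unless $\sigma_4(\ell)/\ell^2$ has abnormally small denominator. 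The exceptional $\ell$ are controlled by a divisor-type argument (for typical $\ell$, the rational number $\sigma_4(\ell)/\ell^2$ has denominator close to $\ell^2$). Combining and summing over $\ell$, $|h|\leq H$, and $r$ then gives the claimed bound on $\Sigma_4$.

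The main obstacle is ensuring that the sieve weights $w_{\ell,q,r}$, which couple $\ell$ and $q$ via divisibility conditions on $\ell q-1$, are compatible with Weyl differencing in $q$. This is handled by interchanging the order of summation so that $q$ is innermost and the divisors from the sieve appear only as outer moduli, converting the problem into one about exponential sums over $q$ in arithmetic progressions; these are then estimated as above, paralleling the treatment of $\Sigma_1$ in Proposition \ref{prop:sigma 1 is small} but adapted to the additional parameter $r$ and the tighter range $q > x^{3/10}$.
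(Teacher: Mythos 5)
Your overall skeleton (factor out the largest prime factor $q>x^{3/10}$ of $p+1$, expand $\sigma_4(p+1)/(p(p+1))$ as a quadratic in $q$ plus decaying terms, majorize the $\|\cdot\|$-condition by a Fourier series, interchange with the sieve weights) matches the paper's, but the exponential-sum step at the core does not work over most of the range, and that is exactly where the difficulty of the proposition lies. Two concrete problems. First, the congruence $p+2\equiv 0\ (r)$ confines $q$ to a single residue class modulo $r\approx x^{1/4}$, so the $q$-sum has only $N\asymp Q/r$ terms (with $Q\asymp x/\ell$); your main estimate treats it as having $\asymp Q$ terms with shifts $|t|\leq Q$, and relegating the progression to a closing remark does not survive the change in numerology. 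Second, and more seriously, after one Weyl differencing the completed geometric series is governed by the rational $\beta\equiv h\sigma_4(\ell)r^2/\ell^2\pmod 1$, whose denominator is generically $\asymp\ell^2$. The standard bound $\sum_{|t|\le N}\min(N,\|\beta t\|^{-1})\ll(N/s+1)(N+s\log s)$ is useless unless $s\ll N^2$, i.e.\ $\ell^2\ll (Q/r)^2$, which forces $Q\gg x^{5/8}$. For $x^{3/10}\ll Q\ll x^{5/8}$ --- most of the range, and not an ``exceptional set of $\ell$'' but essentially all of them --- the method returns nothing; your remark that typical $\ell$ have denominator close to $\ell^2$ identifies precisely the bad case here, not the good one.

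The paper never appeals to rational approximation of the quadratic coefficient. It performs two rounds of Weyl--van der Corput with \emph{short} shifts $j,k\ll(\log x)^{O(1)}$, which annihilate all polynomial terms and leave only the genuinely curved piece $h\frac{\sigma_4(m)}{m^2}(v+rn)^{-2}$, to which the $k$th-derivative test applies --- but only when $|hjk|x^2r^2/Q^6\gg x^{10\epsilon}$, i.e.\ $Q\ll x^{5/12-100\epsilon}$ (Lemma \ref{lem:sigma 4 exp sum, Q less than 5 12}). For $Q\gg x^{5/12+100\epsilon}$ even this fails, and the paper must first discard those $p$ for which $p+1$ lacks a prime factor $t\in(x^{\epsilon^3},x^{\epsilon^2}]$ and then, for the rest, extract that factor from the cofactor $m$, apply Cauchy--Schwarz, and convert the problem into a sum over $t\asymp T$ with phase $A(t^2+t^{-2})$ and $|A|\geq T^5$, handled by Lemma \ref{lem:sigma1 exp sum} (this is Lemma \ref{lem:sigma exp sum, Q bigger than 5 12}); an intermediate window $Q\in[x^{5/12-100\epsilon},x^{5/12+100\epsilon}]$ is removed by a pure sieve bound. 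None of this bifurcation in $Q$, nor the auxiliary ``convenient'' prime factor that makes the large-$Q$ case work, appears in your proposal, so the argument has a genuine gap.
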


The proof of Proposition \ref{prop:sigma 3 not too big} uses sieves and a theorem of Bombieri-Vinogradov-type for smooth numbers.  The use of sieves entails a loss of constant factors in the main term, but these losses are compensated by the fact that the density of the $x^\beta$-smooth numbers drops rapidly as $\beta$ decreases.

To prove Proposition \ref{prop:sigma 4 is small} we note that in $\Sigma_4$ the number $p+1$ has a prime factor $q > x^{3/10}$. Therefore, we may factor $p+1$ as before and proceed to a treatment using exponential sums. The exponential sums are again treated via the method of Weyl-van der Corput, but there are more cases to consider, and we need $p+1$ to have an additional prime factor of size $\approx x^\epsilon$ in addition to the large prime factor $q$.

\begin{proof}[Proof of Theorem \ref{thm:main theorem} assuming Propositions \ref{prop:special p implies close to integer}, \ref{prop:lower bound on number of good primes}, \ref{prop:sigma 1 is small}, \ref{prop:sigma 3 not too big}, and \ref{prop:sigma 4 is small}]
Assume for contradiction that $\alpha_4$ is rational. By Proposition \ref{prop:special p implies close to integer} we have $|\mathcal{S}| \leq \Sigma_1 + \Sigma_2 \leq \Sigma_1 + \Sigma_3 + \Sigma_4$, where $\Sigma_1, \Sigma_3$, and $\Sigma_4$ are defined in \eqref{eq:defn of Sigma 1}, \eqref{eq:defn of Sigma 3}, and \eqref{eq:defn of Sigma 4}, respectively, and $\mathcal{S}$ is defined in Proposition \ref{prop:lower bound on number of good primes}. By Propositions \ref{prop:lower bound on number of good primes} and \ref{prop:sigma 3 not too big} we have
\begin{align*}
4\epsilon \cdot e^{-\gamma}\frac{W^2}{\varphi(W)^3} \frac{x}{2(\log x)^2 \log ((\log x)^{100})}\leq \Sigma_1 + \Sigma_4.
\end{align*}
By Propositions \ref{prop:sigma 1 is small} and \ref{prop:sigma 4 is small} we have
\begin{align*}
\epsilon \frac{W^2}{\varphi(W)^3} \frac{x}{(\log x)^2 \log \log x} \ll \epsilon^2 \frac{W^2}{\varphi(W)^3} \frac{x}{(\log x)^2 \log \log x},
\end{align*}
and this is a contradiction if $\epsilon > 0$ is sufficiently small. Therefore, $\alpha_4$ is irrational.
\end{proof}

Here is the outline of the rest of the paper. In section \ref{sec:lower bound on good primes} we prove some sieve-theoretic lemmas and then use them to prove Proposition \ref{prop:lower bound on number of good primes}. In section \ref{sec:sieving shifted smooth numbers} we prove Proposition \ref{prop:sigma 3 not too big}. In section \ref{sec:sigma 1 is small} we prove Proposition \ref{prop:sigma 1 is small} with the help of a sieve-theoretic lemma and a lemma on exponential sums. In section \ref{sec:sigma 4 is small} we prove Proposition \ref{prop:sigma 4 is small}, where we require some additional lemmas on exponential sums.

\section{Sieving for shifted primes--the proof of Proposition \ref{prop:lower bound on number of good primes}}\label{sec:lower bound on good primes}

Recall the definition of the set $\mathcal{S}$ from the statement of Proposition \ref{prop:lower bound on number of good primes}. We may obtain a lower bound for $|\mathcal{S}|$ via inclusion-exclusion by writing
\begin{align}\label{eq:lower bound mathcal S}
|\mathcal{S}| &\geq \sum_{\substack{x/2 < p \leq x \\ p\equiv -1 (W) \\ P^-(p+2) > x^{1/4-\epsilon} \\ P^-(\frac{p+3}{2})> (\log x)^{100}}} 1 - \mathop{\sum\sum}_{x^{1/4-\epsilon} < r < q \leq x^{1/4}(\log x)^{100}} \sum_{\substack{x/2 < p \leq x \\ p\equiv -1 (W) \\ qr \mid p+2 \\ P^-(p+2) > x^{1/4-\epsilon} \\ P^-(\frac{p+3}{2})> (\log x)^{100}}} 1 -\sum_{\substack{x/2 < p \leq x}} \sum_{\substack{t > x^{1/4 - \epsilon} \\ t^2 \mid p+2}} 1.
\end{align}
Here the variables $p,q,r$, and $t$ denote primes. The last sum in \eqref{eq:lower bound mathcal S} is easy to bound:
\begin{align}\label{eq:bound on square part mathcal S}
\sum_{x/2 < p \leq x}\sum_{\substack{t > x^{1/4 - \epsilon} \\ t^2 \mid p+2}} 1 &= \sum_{x^{1/4-\epsilon} < t \leq 2x^{1/2}} \sum_{\substack{x/2 < p \leq x \\ p\equiv -2 (t^2)}} 1\leq \sum_{x^{1/4-\epsilon} < t \leq 2x^{1/2}}\left(\frac{x}{2t^2} + 1 \right) \ll x^{3/4 + \epsilon}.
\end{align}
The other two sums are more difficult and require somewhat delicate sieve arguments. We state and prove a few results we need, and then give the proof of Proposition \ref{prop:lower bound on number of good primes} at the end of the section.

We begin with the second sum in \eqref{eq:lower bound mathcal S}, the one involving sums over $q$ and $r$. The treatment will be representative of many sums we shall need to bound later.

\begin{lemma}[Two prime factors close together]\label{lem:two prime factors close together}
We have
\begin{align*}
\mathop{\sum\sum}_{x^{1/4-\epsilon} < r < q \leq x^{1/4}(\log x)^{100}} \sum_{\substack{x/2 < p \leq x \\ p\equiv -1 (W) \\ qr \mid p+2}} \mathbf{1}_{P^-(p+2) > x^{1/4-\epsilon}}\cdot \mathbf{1}_{P^-(\frac{p+3}{2})> (\log x)^{100}} \ll \epsilon^2 \frac{W^2}{\varphi(W)^3} \frac{x}{(\log x)^2 \log \log x}.
\end{align*}
\end{lemma}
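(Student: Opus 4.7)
The strategy is to bound the inner sum $T_{q,r}$ uniformly over $q,r$ by a sieve, then apply Mertens' theorem to the outer double sum. Fix primes $r<q$ in the window $(x^{1/4-\epsilon},\,x^{1/4}(\log x)^{100}]$. Since $W\le (\log x)^{2/3}<x^{1/4-\epsilon}<r,q$ for $x$ large, we have $\gcd(W,qr)=1$, so by the Chinese remainder theorem the congruences $p\equiv-1\pmod W$ and $p+2\equiv 0\pmod{qr}$ collapse into a single residue class $p\equiv a\pmod{Wqr}$.

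I would bound $T_{q,r}$ by sieving in stages. First, apply the fundamental lemma \eqref{eq:fundamental lemma} to enforce $P^-((p+3)/2)>(\log x)^{100}$; this contributes a density factor
\begin{align*}
\prod_{\substack{D_0<\pi\le(\log x)^{100}\\ \pi\nmid 2Wqr}}\Bigl(1-\frac{1}{\pi}\Bigr)\asymp\frac{W/\varphi(W)}{\log\log x},
\end{align*}
where Mertens' theorem gives $W/\varphi(W)\asymp\log D_0\asymp\log\log\log x$. Next, apply the Rosser--Iwaniec linear sieve (Lemma \ref{lem:linear sieve}) at sifting level $z=x^{1/4-\epsilon}$ with sieve level $D\le x^{1/4}$ to enforce $P^-((p+2)/qr)>x^{1/4-\epsilon}$; this contributes a further factor $\asymp(W/\varphi(W))/\log x$. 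What remains is a weighted sum of $\pi(x;Wqr\,d_1d_2,a)$ against sieve weights, and since $Wqr\,d_1d_2\ll x^{3/4+o(1)}$, Brun--Titchmarsh supplies $\pi(x;Wqr\,d_1d_2,a)\ll x/(\varphi(Wqr\,d_1d_2)\log x)$. Reassembling and using $\varphi(Wqr)\sim\varphi(W)qr$ (valid since $q,r$ are large primes, hence coprime and each satisfies $\varphi(\cdot)\sim\cdot$) produces
\begin{align*}
T_{q,r}\ll \frac{x\,W^2}{\varphi(W)^3\,qr\,(\log x)^2\log\log x}.
\end{align*}

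For the outer double sum, Mertens' theorem yields
\begin{align*}
\sum_{x^{1/4-\epsilon}<q\le x^{1/4}(\log x)^{100}}\frac{1}{q}=\log\frac{\log(x^{1/4}(\log x)^{100})}{\log x^{1/4-\epsilon}}+o(1)=\log\frac{1}{1-4\epsilon}+o(1)\ll\epsilon,
\end{align*}
so $\sum_{r<q}\frac{1}{qr}\le\tfrac12\bigl(\sum_q\tfrac{1}{q}\bigr)^2\ll\epsilon^2$. Multiplying with the bound on $T_{q,r}$ gives the claimed estimate. The main obstacle is executing the sieve step cleanly when three conditions are sifted at three disparate levels; handling the sieves sequentially keeps each one-dimensional and the cumulative modulus $Wqr\,d_1d_2$ within the Brun--Titchmarsh range, while the $W$-trick ensures every density factor takes the clean Mertens shape $(W/\varphi(W))/\log(\text{level})$.
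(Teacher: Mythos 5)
Your reduction to a uniform bound $T_{q,r}\ll \frac{W^2}{\varphi(W)^3}\frac{x}{qr(\log x)^2\log\log x}$ followed by the Mertens computation $\sum_{r<q}1/(qr)\ll\epsilon^2$ is exactly the paper's outer structure, and your bookkeeping of the density factors does produce the right final shape. But the central step has a genuine gap: you cannot bound a sum of the form $\sum_{d}\lambda_d^+\,\pi(x;Wqr d,a_d)$ by applying Brun--Titchmarsh termwise, because the Rosser--Iwaniec weights $\lambda_d^+$ (and the truncated M\"obius weights of the fundamental lemma) are signed. Termwise upper bounds are only legitimate when every weight is nonnegative; with signed weights you need an asymptotic $\pi(x;k,a)=\pi(x)/\varphi(k)+r_k$ with $\sum_k|r_k|$ small over the relevant moduli. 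Here the moduli contain the factor $qr>x^{1/2-2\epsilon}$ before any sieve divisors are attached, so this is far outside the Bombieri--Vinogradov range, and no unconditional equidistribution result covers it.

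The paper sidesteps this by \emph{discarding the primality of $p$ altogether}: it majorizes $\mathbf{1}_{p\ \mathrm{prime}}$ by $\mathbf{1}_{P^-(n)>x^{1/100}}$ and detects that condition with yet another upper-bound sieve, so the underlying sequence becomes all integers $n\in(x/2,x]$ in a fixed residue class. For integers the count modulo $d_1d_2efqrW\ll x^{3/4}$ is exactly $\frac{x}{2d_1d_2efqrW}+O(1)$, which is an asymptotic (not merely an upper bound) and hence compatible with signed sieve weights; the $O(1)$ errors sum to $O(x^{3/4+o(1)})$ and are negligible. The lost constant from sieving $n$ only up to $x^{1/100}$ is harmless since the lemma is an upper bound up to absolute constants, and the factor $\frac{\log\log x}{\log x}$ you would have gotten from Brun--Titchmarsh is instead recovered from the sieve sum over the medium primes in $((\log x)^{100},x^{1/100}]$ dividing $n$. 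If you rework your argument with this replacement (and check the pairwise coprimality of the four sieve variables, which the paper handles via the fundamental lemma on the small primes), the rest of your outline goes through.
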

\begin{proof}
It suffices to show
\begin{align*}
T_{q,r} = T = \sum_{\substack{x/2 < p \leq x \\ p\equiv -1 (W) \\ qr \mid p+2}} \mathbf{1}_{P^-(p+2) > x^{1/4-\epsilon}}\cdot \mathbf{1}_{P^-(\frac{p+3}{2})> (\log x)^{100}} \ll \frac{W^2}{\varphi(W)^3} \frac{x}{qr(\log x)^2 \log \log x},
\end{align*}
since by Mertens' theorem we have
\begin{align*}
\mathop{\sum\sum}_{x^{1/4-\epsilon} < r < q \leq x^{1/4}(\log x)^{100}} \frac{1}{qr} \leq \Big(\sum_{x^{1/4 - \epsilon} < q \leq x^{1/4}(\log x)^{100}} \frac{1}{q} \Big)^2 \ll \epsilon^2.
\end{align*}

We bound $T$ by using upper-bound sieves to relax the conditions that $p$ is prime and $p+2$ and $\frac{p+3}{2}$ have no small prime factors. For instance, we may write
\begin{align*}
T &\leq \sum_{\substack{x/2 < n \leq x \\ n\equiv -1 (W) \\ qr \mid n+2}} \mathbf{1}_{P^-(n) > x^{1/100}}\cdot \mathbf{1}_{P^-(n+2) > x^{1/100}}\cdot \mathbf{1}_{P^-(\frac{n+3}{2})> (\log x)^{100}}.
\end{align*}
We also utilize the fundamental lemma in order to overcome some technical issues relating to coprimality of the sieve weight variables. Let us write $D = x^{1/10}$, and $R = \lfloor (\log \log x)^2\rfloor$. Then by \eqref{eq:linear sieve inequalities} and \eqref{eq:fundamental lemma}
\begin{align*}
T &\leq \sum_{\substack{x/2 < n \leq x \\ n\equiv -1 (W) \\ n\equiv -2(qr)}} \Big(\sum_{\substack{d_1\mid P((\log x)^{100}) \\ d_1 \mid n \\ \omega(d_1) \leq 2R \\ (d_1,W)=1}} \mu(d_1) \Big)\Big(\sum_{\substack{d_2 \mid P((\log x)^{100},x^{1/100}) \\ d_2 \mid n \\ d_2 \leq D}}\lambda_{d_2}^+ \Big)\Big(\sum_{\substack{e \mid P(x^{1/100}) \\ e \mid n+2 \\ e \leq D \\(e,W)=1}}\lambda_e^+ \Big)\Big(\sum_{\substack{f\mid P((\log x)^{100}) \\ f \mid n+3 \\ \omega(f) \leq 2R \\ (f,W)=1}} \mu(f) \Big),
\end{align*}
where $\lambda_{d_2}^+,\lambda_e^+$ are the upper bound linear sieve weights. We then interchange the orders of summation, and note that $d_1,d_2,e$ and $f$ are pairwise coprime. Hence
\begin{align*}
T &\leq \sum_{\substack{e \mid P(x^{1/100}) \\ e \leq D \\(e,W)=1}}\lambda_e^+\sum_{\substack{d_1\mid P((\log x)^{100}) \\ \omega(d_1) \leq 2R \\ (d_1,eW)=1}} \mu(d_1)\sum_{\substack{f\mid P((\log x)^{100}) \\ \omega(f) \leq 2R \\ (f,d_1eW)=1}} \mu(f)\sum_{\substack{d_2 \mid P((\log x)^{100},x^{1/100}) \\ d_2 \leq D \\(d_2,d_1ef)=1}}\lambda_{d_2}^+\sum_{\substack{x/2 < n \leq x \\ n\equiv v (d_1d_2efqrW)}} 1,
\end{align*}
where we have used the Chinese remainder theorem to write the congruence conditions on $n$ as a single congruence condition modulo $d_1d_2efqrW$. Since $d_1d_2efqrW \ll x^{3/4}$, say, we have
\begin{align*}
T &\leq \frac{x}{2Wqr}\sum_{\substack{e \mid P(x^{1/100}) \\ e \leq D \\(e,W)=1}}\frac{\lambda_e^+}{e}\sum_{\substack{d_1\mid P((\log x)^{100}) \\ \omega(d_1) \leq 2R \\ (d_1,eW)=1}} \frac{\mu(d_1)}{d_1}\sum_{\substack{f\mid P((\log x)^{100}) \\ \omega(f) \leq 2R \\ (f,d_1eW)=1}} \frac{\mu(f)}{f}\sum_{\substack{d_2 \mid P((\log x)^{100},x^{1/100}) \\ d_2 \leq D \\(d_2,d_1ef)=1}}\frac{\lambda_{d_2}^+}{d_2} + O\left(\frac{x^{1-\epsilon}}{qr}\right).
\end{align*}
To finish the proof of the lemma, then, it suffices to show that this fourfold sum over $d_1,d_2,e$, and $f$ is $\ll \frac{W^3}{\varphi(W)^3} \frac{1}{(\log x)^2 \log \log x}$.

Since every prime divisor of $d_2$ is $> (\log x)^{100}$, we may remove the condition that $d_2$ is coprime to $d_1ef$ at the cost of a negligible error:
\begin{align*}
\sum_{\substack{d_2 \mid P((\log x)^{100},x^{1/100}) \\ d_2 \leq D \\(d_2,d_1ef)>1}}\frac{1}{d_2} &\leq \sum_{d_2 \leq D}\frac{1}{d_2}\sum_{\substack{p\mid d_1ef \\ p \mid d_2 \\ p > (\log x)^{100}}}1 \leq \sum_{\substack{p \mid d_1ef \\ p > (\log x)^{100}}} \sum_{\substack{d_2\leq D \\ p\mid d_2}} \frac{1}{d} \ll \log x\sum_{\substack{p\mid d_1ef \\ p > (\log x)^{100}}} \frac{1}{p} \\
&\ll (\log x)^{-98}.
\end{align*}
Therefore we may make the sum over $d_2$ independent of the other sums, and by Lemma \ref{lem:linear sieve} we have
\begin{align*}
\sum_{\substack{d_2 \mid P((\log x)^{100},x^{1/100}) \\ d_2 \leq D}}\frac{\lambda_{d_2}^+}{d_2} \ll \prod_{(\log x)^{100} < p \leq x^{1/100}} \left(1 - \frac{1}{p} \right) \ll \frac{\log \log x}{\log x}.
\end{align*}
It now suffices to show that
\begin{align*}
\sum_{\substack{e \mid P(x^{1/100}) \\ e \leq D \\(e,W)=1}}\frac{\lambda_e^+}{e}\sum_{\substack{d_1\mid P((\log x)^{100}) \\ \omega(d_1) \leq 2R \\ (d_1,eW)=1}} \frac{\mu(d_1)}{d_1}\sum_{\substack{f\mid P((\log x)^{100}) \\ \omega(f) \leq 2R \\ (f,d_1eW)=1}} \frac{\mu(f)}{f} \ll \frac{W^3}{\varphi(W)^3}\frac{1}{(\log x)(\log \log x)^2}.
\end{align*}

We may treat the sums over $f$ and $d_1$ quite accurately. By \cite[(6.8)]{FI2010} we have
\begin{align*}
\sum_{\substack{f\mid P((\log x)^{100}) \\ \omega(f) \leq 2R \\ (f,d_1eW)=1}} \frac{\mu(f)}{f} &= \sum_{\substack{f\mid P((\log x)^{100}) \\ (f,d_1eW)=1}} \frac{\mu(f)}{f} + O \Big(\sum_{\substack{f\mid P((\log x)^{100}) \\ \omega(f) = 2R+1}} \frac{\mu^2(f)}{f} \Big),
\end{align*}
and the error term has size $\leq \frac{1}{(2R+1)!}\Big(\sum_{p\leq (\log x)^{100}} \frac{1}{p} \Big)^{2R+1} \ll_A (\log x)^{-A}$. By multiplicativity
\begin{align*}
\sum_{\substack{f\mid P((\log x)^{100}) \\ (f,d_1eW)=1}} \frac{\mu(f)}{f} &= \frac{d_1W}{\varphi(d_1)\varphi(W)} \prod_{\substack{p \mid e \\ p\leq (\log x)^{100}}}\left( 1 - \frac{1}{p}\right)^{-1} \prod_{p \leq (\log x)^{100}} \left( 1 - \frac{1}{p}\right).
\end{align*}
We can remove the condition $p\leq (\log x)^{100}$ on the primes dividing $e$ at the cost of an acceptable error term. By Mertens' theorem the product over primes $p\leq (\log x)^{100}$ has size $\asymp \frac{1}{\log \log x}$, so we must show
\begin{align*}
\sum_{\substack{e \mid P(x^{1/100}) \\ e \leq D \\(e,W)=1}}\frac{\lambda_e^+}{\varphi(e)}\sum_{\substack{d_1\mid P((\log x)^{100}) \\ \omega(d_1) \leq 2R \\ (d_1,eW)=1}} \frac{\mu(d_1)}{\varphi(d_1)} \ll \frac{W^2}{\varphi(W)^2} \frac{1}{(\log x) (\log \log x)}.
\end{align*}

By a similar argument to the one we did for the sum over $f$, the sum over $d_1$ is equal to
\begin{align}\label{eq:intermed sieve quant}
\prod_{p\mid e}\frac{p-1}{p-2} \prod_{D_0 < p \leq (\log x)^{100}}\left(1 - \frac{1}{p-1}\right)
\end{align}
and an error of size $O((\log x)^{-50})$, say. If we write 
\begin{align}\label{eq:defn of h}
h(e) &= \prod_{p\mid e}\frac{p-1}{p-2}
\end{align}
for the multiplicative function, we see by multiplying and dividing that the quantity in \eqref{eq:intermed sieve quant} is
\begin{align*}
h(e) \prod_{D_0 < p \leq (\log x)^{100}}\left(1 - \frac{1}{(p-1)^2}\right) \prod_{D_0 < p\leq (\log x)^{100}}\left( 1 - \frac{1}{p}\right).
\end{align*}
Excluding the term $h(e)$, the products over primes have size $\asymp \frac{W}{\varphi(W)} \frac{1}{\log \log x}$. Hence, in order to complete the proof it suffices to show (at last!) that
\begin{align*}
\sum_{\substack{e \mid P(x^{1/100}) \\ e \leq D \\(e,W)=1}}\frac{\lambda_e^+h(e)}{\varphi(e)} \ll \frac{W}{\varphi(W)} \frac{1}{\log x}.
\end{align*}

By Lemma \ref{lem:linear sieve}, we have
\begin{align*}
\sum_{\substack{e \mid P(x^{1/100}) \\ e \leq D \\(e,W)=1}}\frac{\lambda_e^+h(e)}{\varphi(e)} &\ll \prod_{D_0 < p\leq x^{1/100}} \left(1 - \frac{h(p)}{p-1}\right)=\prod_{D_0 < p\leq x^{1/100}} \left(1 - \frac{1}{p-2}\right) \\ 
&= \prod_{D_0 < p\leq x^{1/100}}\left(1 - \frac{2}{p^2-3p+2} \right)\prod_{D_0 < p\leq (\log x)^{100}}\left(1 - \frac{1}{p}\right) \\
&\ll \frac{W}{\varphi(W)}\frac{1}{\log x},
\end{align*}
as required.
\end{proof}

To prove a lower bound on the first sum in \eqref{eq:lower bound mathcal S} we need a familiar inequality related to the vector sieve \cite{BF1994, BF1996}.

\begin{lemma}[Vector sieve inequality]\label{lem:vector sieve inequality}
Let $\delta_1,\delta_2$ be non-negative real numbers. For $i\in \{1,2\}$ let $\delta_i^\pm$ be real numbers satisfying $\delta_i^- \leq \delta_i \leq \delta_i^+$. Then $\delta_1 \delta_2 \geq \delta_1^+ \delta_2^- + \delta_1^-\delta_2^+ - \delta_1^+ \delta_2^+$.
\end{lemma}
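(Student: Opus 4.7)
The plan is to prove this two-variable inequality by starting from the trivial observation that $(\delta_1^+ - \delta_1)(\delta_2^+ - \delta_2) \geq 0$, since both factors are non-negative by the hypothesis $\delta_i \leq \delta_i^+$. Expanding the product and rearranging immediately yields the intermediate inequality
\begin{align*}
\delta_1 \delta_2 \geq \delta_1^+ \delta_2 + \delta_1 \delta_2^+ - \delta_1^+ \delta_2^+.
\end{align*}

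The remaining step is to replace $\delta_i$ on the right-hand side by $\delta_i^-$. For this to go through in the desired direction I need to know that the factors $\delta_1^+$ and $\delta_2^+$ are themselves non-negative. This is where the hypothesis $\delta_1, \delta_2 \geq 0$ enters: combined with $\delta_i \leq \delta_i^+$ it forces $\delta_i^+ \geq 0$. Then from $\delta_2 \geq \delta_2^-$ and $\delta_1^+ \geq 0$ I get $\delta_1^+ \delta_2 \geq \delta_1^+ \delta_2^-$, and symmetrically $\delta_1 \delta_2^+ \geq \delta_1^- \delta_2^+$. Substituting these lower bounds into the intermediate inequality gives the claimed
\begin{align*}
\delta_1 \delta_2 \geq \delta_1^+ \delta_2^- + \delta_1^- \delta_2^+ - \delta_1^+ \delta_2^+.
\end{align*}

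There is no serious obstacle to overcome; the only subtlety is the use of non-negativity of $\delta_i^+$ (rather than of $\delta_i^-$, which is not assumed) to preserve the inequality direction when passing from $\delta_i$ to $\delta_i^-$. This is precisely the reason the hypothesis asks for $\delta_1, \delta_2 \geq 0$ rather than merely real.
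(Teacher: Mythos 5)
Your proof is correct and is essentially identical to the paper's: both start from $(\delta_1^+ - \delta_1)(\delta_2^+ - \delta_2) \geq 0$, rearrange, and then use the non-negativity of $\delta_i^+$ (which follows from $0 \leq \delta_i \leq \delta_i^+$) to replace $\delta_i$ by $\delta_i^-$ on the right-hand side. Your explicit remark on why $\delta_i^+ \geq 0$ (rather than $\delta_i^- \geq 0$) is the needed hypothesis matches the paper's brief justification.
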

\begin{proof}
By positivity we have $(\delta_1^+ - \delta_1)(\delta_2^+ - \delta_2) \geq 0$. Rearranging yields
\begin{align*}
\delta_2 \delta_2 &\geq \delta_1^+ \delta_2 + \delta_1 \delta_2^+ - \delta_1^+ \delta_2^+ \geq \delta_1^+ \delta_2^- + \delta_1^-\delta_2^+ - \delta_1^+ \delta_2^+,
\end{align*}
where in the second inequality we have used $\delta_i^+ \geq 0$.
\end{proof}

\begin{lemma}[Shifted primes with no small prime factors]\label{lem:lower bound on shifted prime}
For $x$ sufficiently large we have
\begin{align*}
\sum_{\substack{x/2 < p \leq x \\ p\equiv -1 (W) \\ P^-(p+2) > x^{1/4-\epsilon} \\ P^-(\frac{p+3}{2})> (\log x)^{100}}} 1 &\geq 31 \epsilon \cdot e^{-\gamma}\frac{W^2}{\varphi(W)^3} \frac{x}{2(\log x)^2 \log ((\log x)^{100})}.
\end{align*}
\end{lemma}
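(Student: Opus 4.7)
The plan is to apply the vector sieve inequality of Lemma~\ref{lem:vector sieve inequality} to the two coprimality conditions on $p+2$ and $(p+3)/2$, handling the sieve on $p+2$ with the Rosser-Iwaniec linear sieve of Lemma~\ref{lem:linear sieve} at level $D_1 = x^{1/2}/(\log x)^B$, handling the sieve on $(p+3)/2$ with the fundamental lemma \eqref{eq:fundamental lemma} (truncating at $\omega(e) \leq 2R$ or $2R+1$ with $R = \lfloor\log\log x\rfloor$), and averaging over primes $p \equiv -1 \pmod{W}$ via the Bombieri-Vinogradov theorem. The $W$-trick forces $p+2$ and $(p+3)/2$ to be coprime to $W$, so both sieves range over primes $q > D_0$ only.

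Concretely, with $z_1 = x^{1/4-\epsilon}$ and $z_2 = (\log x)^{100}$, let $\Lambda_1^{\pm} = \sum_{d\mid(p+2),\,d\mid P(z_1)}\lambda_d^{\pm}$ be the linear sieve weights of level $D_1$ and let $\Lambda_2^{+},\Lambda_2^{-}$ be the truncated M\"obius sums from \eqref{eq:fundamental lemma}. Lemma~\ref{lem:vector sieve inequality} gives the pointwise inequality $\mathbf{1}_{P^-(p+2) > z_1}\mathbf{1}_{P^-((p+3)/2) > z_2} \geq \Lambda_1^+\Lambda_2^- + \Lambda_1^-\Lambda_2^+ - \Lambda_1^+\Lambda_2^+$. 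Expanding and interchanging sums, each term becomes a bilinear sum over pairs $(d,e)$ (coprime since $\gcd(p+2,p+3)=1$) weighted against a count of primes $p\in(x/2,x]$ in a single arithmetic progression modulo $Wde$. Since $W,d,e$ satisfy $Wde \leq x^{1/2}/(\log x)^{B/2}$, Bombieri-Vinogradov produces the main term $\frac{x/2}{\varphi(W)\log x}\bigl(\sum_d\lambda_d^\bullet/\varphi(d)\bigr)\bigl(\sum_e\lambda_e^\bullet/\varphi(e)\bigr)$. Applying Lemma~\ref{lem:linear sieve} and the fundamental lemma, the $F(s_1)$ contributions from $\Lambda_1^+\Lambda_2^-$ and $\Lambda_1^+\Lambda_2^+$ cancel and the fundamental lemma is accurate enough that $V_2^\pm = V_2(1+o(1))$, so the three vector-sieve terms combine to
\begin{align*}
\sum_{p} \mathbf{1}_1\mathbf{1}_2 \geq f(s_1)\,V_1 V_2 \cdot \frac{x/2}{\varphi(W)\log x}\,(1+o(1)),
\end{align*}
where $V_i = \prod_{D_0 < q \leq z_i}(1-1/(q-1))$ and $s_1 = \log D_1/\log z_1$.

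Mertens' theorem, together with the identity $1 - 1/(q-1) = (1-1/q)(1 - 1/(q-1)^2)$ and $\varphi(W)/W = \prod_{p\leq D_0}(1-1/p)$, gives $V_i = (W/\varphi(W)) \cdot e^{-\gamma}/\log z_i \cdot (1+o(1))$. Since $s_1 \to 2/(1-4\epsilon)$, we have $s_1 - 1 = (1+4\epsilon)/(1-4\epsilon)$, and integrating the system in Lemma~\ref{lem:linear sieve} yields $sf(s) = 2e^\gamma\log(s-1)$ for $2\leq s\leq 4$. Thus $f(s_1) = 8\epsilon e^\gamma(1-4\epsilon) + O(\epsilon^3)$, and combining with $\log x/\log z_1 = 4/(1-4\epsilon)$ the final constant is $f(s_1)e^{-\gamma}/(1/4-\epsilon) = 32\epsilon + O(\epsilon^3)$, which exceeds $31\epsilon$ for $\epsilon$ sufficiently small. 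Substituting back yields precisely the claimed lower bound with constant $31\epsilon\cdot e^{-\gamma}$.

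The main obstacle is that we are sifting right at the linear sieve's sharp boundary $s=2$, where $f(s)$ vanishes; the leading constant $32\epsilon$ leaves only a $\approx 3\%$ margin against the required $31\epsilon$. One therefore must choose the Bombieri-Vinogradov loss $B$, the fundamental lemma depth $R$, and the $W$-trick modulus so that every $o(1)$ error term (from Bombieri-Vinogradov, the fundamental lemma cut-off, and the Mertens-type evaluation of $V_1,V_2$) is controllable within this narrow margin. The delicacy is essentially the same as in other ``sieve-at-the-limit'' applications, and it is why the lemma's quantitative form must be proved with the sharp linear sieve rather than a softer variant.
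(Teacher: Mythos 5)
Your proposal follows essentially the same route as the paper: the vector sieve splitting of the two conditions, the Rosser--Iwaniec lower-bound weights on $P^-(p+2)>x^{1/4-\epsilon}$, the fundamental lemma on $(p+3)/2$ (with the two upper truncations differing only by a single-parity term over $\omega(e)=2R$ that is negligible, which is exactly how the paper disposes of the $\Lambda_1^+$ terms), Bombieri--Vinogradov for the remainders, and the same evaluation $f(s_1)=e^{\gamma}(1-4\epsilon)\log\tfrac{1+4\epsilon}{1-4\epsilon}$ leading to the constant $32\epsilon$ against the required $31\epsilon$. The only quibble is your sieve level $D_1=x^{1/2}/(\log x)^{B}$: once $e$ ranges over squarefree products of up to $2R+1$ primes below $(\log x)^{100}$, the combined modulus $Wde$ can exceed $x^{1/2}$ for any fixed $B$, which is why the paper takes $D=x^{1/2}\exp(-\sqrt{\log x})$ --- a trivial adjustment.
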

\begin{proof}
We write $D = x^{1/2}\exp(-\sqrt{\log x})$ and $R = \lfloor (\log \log x)^2\rfloor$. For $p\equiv -1 \pmod{W}$ we have by Lemma \ref{lem:vector sieve inequality}
\begin{align*}
&\mathbf{1}_{P^-(p+2) > x^{1/4-\epsilon}} \mathbf{1}_{P^-(\frac{p+3}{2})> (\log x)^{100}}  \\ 
&\geq \Big(\sum_{\substack{d\mid P(x^{1/4-\epsilon}) \\ d\leq D \\ d \mid p+2 \\ (d,W)=1}} \lambda_d^+ \Big)\Big(\sum_{\substack{e \mid P((\log x)^{100}) \\ \omega(e) \leq 2R-1 \\ e\mid p+3 \\ (e,W)=1}}\mu(e) \Big) + \Big(\sum_{\substack{d\mid P(x^{1/4-\epsilon}) \\ d\leq D \\ d \mid p+2 \\ (d,W)=1}} \lambda_d^- \Big)\Big(\sum_{\substack{e \mid P((\log x)^{100}) \\ \omega(e) \leq 2R \\ e\mid p+3 \\ (e,W)=1}}\mu(e) \Big) \\
&- \Big(\sum_{\substack{d\mid P(x^{1/4-\epsilon}) \\ d\leq D \\ d \mid p+2 \\ (d,W)=1}} \lambda_d^+ \Big)\Big(\sum_{\substack{e \mid P((\log x)^{100}) \\ \omega(e) \leq 2R \\ e\mid p+3 \\ (e,W)=1}}\mu(e) \Big).
\end{align*}
Observe that
\begin{align*}
&\Big(\sum_{\substack{d\mid P(x^{1/4-\epsilon}) \\ d\leq D \\ d \mid p+2 \\ (d,W)=1}} \lambda_d^+ \Big)\Big(\sum_{\substack{e \mid P((\log x)^{100}) \\ \omega(e) \leq 2R-1 \\ (e,W)=1}}\mu(e) \Big) - \Big(\sum_{\substack{d\mid P(x^{1/4-\epsilon}) \\ d\leq D \\ d \mid p+2 \\ (d,W)=1}} \lambda_d^+ \Big)\Big(\sum_{\substack{e \mid P((\log x)^{100}) \\ \omega(e) \leq 2R \\ (e,W)=1}}\mu(e) \Big) \\
&= -\Big(\sum_{\substack{d\mid P(x^{1/4-\epsilon}) \\ d\leq D \\ d \mid p+2 \\ (d,W)=1}} \lambda_d^+ \Big)\Big(\sum_{\substack{e \mid P((\log x)^{100}) \\ \omega(e) =2R \\ (e,W)=1}}\mu(e) \Big),
\end{align*}
and
\begin{align*}
\sum_{\substack{x/2 < p \leq x \\ p\equiv - 1(W)}} \Big(\sum_{\substack{d\mid P(x^{1/4-\epsilon}) \\ d\leq D \\ d \mid p+2 \\ (d,W)=1}} \lambda_d^+ \Big)\Big(\sum_{\substack{e \mid P((\log x)^{100}) \\ \omega(e) =2R \\ (e,W)=1}}\mu(e) \Big) &\leq \sum_{d\leq D} \sum_{\substack{e\mid P((\log x)^{100}) \\ \omega(e)=2R \\ (e,d)=1}} \mu^2(e)\sum_{\substack{x/2 < n \leq x \\ n\equiv -2 (d) \\ n\equiv -3(e)}} 1.
\end{align*}
Since $d$ and $e$ are coprime we may combine the congruence conditions into a single congruence modulo $de$, and since $e\leq \exp(200(\log \log x)^3)$ this is
\begin{align*}
&\leq x\sum_{d\leq D}\frac{1}{d} \sum_{\substack{e\mid P((\log x)^{100}) \\ \omega(e)=2R}} \frac{\mu^2(e)}{e} + O(x^{1/2}).
\end{align*}
This has size $\ll_A x(\log x)^{-A}$ because
\begin{align*}
\sum_{\substack{e\mid P((\log x)^{100}) \\ \omega(e)=2R}} \frac{\mu^2(e)}{e} &\leq \frac{1}{(2R)!}\Big(\sum_{p\leq (\log x)^{100}} \frac{1}{p} \Big)^{2R} \ll_A (\log x)^{-A}.
\end{align*}

We have therefore proved
\begin{align}\label{eq:lower bound after vector sieve}
\sum_{\substack{x/2 < p \leq x \\ p\equiv -1 (W) \\ P^-(p+2) > x^{1/4-\epsilon} \\ P^-(\frac{p+3}{2})> (\log x)^{100}}} 1 &\geq \sum_{\substack{x/2 < p \leq x \\ p\equiv -1(W)}} \Big(\sum_{\substack{d\mid P(x^{1/4-\epsilon}) \\ d\leq D \\ d \mid p+2 \\ (d,W)=1}} \lambda_d^- \Big)\Big(\sum_{\substack{e \mid P((\log x)^{100}) \\ \omega(e) \leq 2R \\ e\mid p+3 \\ (e,W)=1}}\mu(e) \Big) +O_A\left( \frac{x}{(\log x)^A}\right).
\end{align}
Let us write
\begin{align*}
U &:= \sum_{\substack{x/2 < p \leq x \\ p\equiv -1(W)}} \Big(\sum_{\substack{d\mid P(x^{1/4-\epsilon}) \\ d\leq D \\ d \mid p+2 \\ (d,W)=1}} \lambda_d^- \Big)\Big(\sum_{\substack{e \mid P((\log x)^{100}) \\ \omega(e) \leq 2R \\ e\mid p+3 \\ (e,W)=1}}\mu(e) \Big) = \sum_{\substack{d\mid P(x^{1/4-\epsilon}) \\ d\leq D \\ (d,W)=1}}\lambda_d^- \sum_{\substack{e\mid (P(\log x)^{100}) \\ \omega(e)\leq 2R \\ (e,dW)=1}}\mu(e)\sum_{\substack{x/2 < p \leq x \\ p\equiv -1 (W) \\ p\equiv -2 (d) \\ p\equiv -3 (e)}} 1.
\end{align*}
By the Chinese remainder theorem we may combine the congruence conditions on $p$ into a single primitive residue class modulo $deW$. By inclusion-exclusion we then have
\begin{align*}
U &= \frac{\pi(x)-\pi(x/2)}{\varphi(W)}\sum_{\substack{d\mid P(x^{1/4-\epsilon}) \\ d\leq D \\ (d,W)=1}}\frac{\lambda_d^-}{\varphi(d)} \sum_{\substack{e\mid P((\log x)^{100}) \\ \omega(e)\leq 2R \\ (e,dW)=1}}\frac{\mu(e)}{\varphi(e)} + O(E),
\end{align*}
where
\begin{align*}
E &\ll \max_{y \in \{x/2,x\}}\sum_{d\leq D}\sum_{e\leq (\log x)^{200R}} \max_{(a,deW)=1}\Big|\sum_{\substack{p\leq y \\ p\equiv a (deW)}}1 - \frac{1}{\varphi(deW)}\sum_{\substack{p\leq y \\ (p,deW)=1}} 1\Big| \\
&\leq \max_{y \in \{x/2,x\}} \sum_{g\leq x^{1/2}\exp(-(\log x)^{1/3})} \tau(g) \max_{(a,g)=1}\Big|\sum_{\substack{p\leq y \\ p\equiv a (g)}}1 - \frac{1}{\varphi(g)}\sum_{\substack{p\leq y \\ (p,g)=1}} 1\Big|.
\end{align*}
By Cauchy-Schwarz, the trivial bound
\begin{align*}
\max_{(a,g)=1}\Big|\sum_{\substack{p\leq y \\ p\equiv a (g)}}1 - \frac{1}{\varphi(g)}\sum_{\substack{p\leq y \\ (p,g)=1}} 1\Big| \ll \frac{x}{\varphi(g)},
\end{align*}
and the Bombieri-Vinogradov theorem (\cite[Chapter 28]{Dav2000} or \cite[Theorem 9.18]{FI2010}) we obtain $E \ll x(\log x)^{-100}$, say. Since
\begin{align*}
\sum_{\substack{e\mid P((\log x)^{100}) \\ \omega(e)\leq 2R \\ (e,dW)=1}}\frac{\mu(e)}{\varphi(e)} = \sum_{\substack{e\mid P((\log x)^{100}) \\ (e,dW)=1}}\frac{\mu(e)}{\varphi(e)} + O_A((\log x)^{-A})
\end{align*}
we find
\begin{align*}
U &= \frac{\pi(x)-\pi(x/2)}{\varphi(W)}\sum_{\substack{d\mid P(x^{1/4-\epsilon}) \\ d\leq D \\ (d,W)=1}}\frac{\lambda_d^-}{\varphi(d)} \prod_{\substack{D_0 < p\leq (\log x)^{100} \\ p\nmid d}}\left(1 - \frac{1}{p-1} \right) + O(x(\log x)^{-100}).
\end{align*}
By trivial estimation we have
\begin{align*}
&\prod_{\substack{D_0 < p\leq (\log x)^{100} \\ p\nmid d}}\left(1 - \frac{1}{p-1} \right) = \prod_{p\mid d}\left(1 - \frac{1}{p-1} \right)^{-1} \prod_{D_0 < p\leq (\log x)^{100}} \left(1 - \frac{1}{p-1}\right) + O((\log x)^{-75}) \\
&= h(d)\prod_{D_0 < p\leq (\log x)^{100}}\left(1 - \frac{1}{(p-1)^2} \right) \frac{W}{\varphi(W)}\prod_{p\leq (\log x)^{100}}\left(1 - \frac{1}{p}\right) + O((\log x)^{-75}),
\end{align*}
where $h(d)$ is defined in \eqref{eq:defn of h}.

It follows that
\begin{align*}
U &= \frac{W}{\varphi(W)^2}(\pi(x)-\pi(x/2))\prod_{D_0 < p\leq (\log x)^{100}}\left(1 - \frac{1}{(p-1)^2} \right)\prod_{p\leq (\log x)^{100}}\left(1 - \frac{1}{p}\right)\sum_{\substack{d\mid P(x^{1/4-\epsilon}) \\ d\leq D \\ (d,W)=1}}\frac{\lambda_d^- h(d)}{\varphi(d)} \\ 
&+ O(x (\log x)^{-20}).
\end{align*}
Lemma \ref{lem:linear sieve} yields
\begin{align*}
\sum_{\substack{d\mid P(x^{1/4-\epsilon}) \\ d\leq D \\ (d,W)=1}}\frac{\lambda_d^- h(d)}{\varphi(d)} &\geq (1-O(\epsilon)) f\left(\frac{\log D}{\log (x^{1/4-\epsilon})} \right) \prod_{D_0 < p\leq x^{1/4-\epsilon}}\left( 1 - \frac{1}{p-2}\right).
\end{align*}
Since
\begin{align*}
\prod_{D_0 < p\leq x^{1/4-\epsilon}}\left( 1 - \frac{1}{p-2}\right) = \frac{W}{\varphi(W)}\prod_{D_0 < p \leq x^{1/4-\epsilon}}\left(1 - \frac{2}{p^2-3p+2} \right)\prod_{p\leq x^{1/4-\epsilon}}\left(1 - \frac{1}{p}\right)
\end{align*}
we obtain
\begin{align*}
U &\geq (1-O(\epsilon))f\left(\frac{\log D}{\log (x^{1/4-\epsilon})} \right) \frac{W^2}{\varphi(W)^3}\frac{x}{2\log x}\prod_{p\leq (\log x)^{100}}\left(1 - \frac{1}{p}\right) \prod_{p\leq x^{1/4-\epsilon}}\left(1 - \frac{1}{p}\right).
\end{align*}
By Mertens' theorem this simplifies to
\begin{align*}
U &\geq (1-O(\epsilon)) \frac{e^{-2\gamma}}{\frac{1}{4}-\epsilon}f\left(\frac{\log D}{\log (x^{1/4-\epsilon})} \right) \frac{W^2}{\varphi(W)^3}\frac{x}{2(\log x)^2\log ((\log x)^{100})}.
\end{align*}

The function $f(s)$ is equal to $\frac{2e^\gamma}{s}\log(s-1)$ for $2\leq s\leq 4$, and
\begin{align*}
\frac{\log D}{\log (x^{1/4-\epsilon})} = \frac{2}{1-4\epsilon} + O((\log x)^{-1/2}),
\end{align*}
so by continuity
\begin{align*}
f\left(\frac{\log D}{\log (x^{1/4-\epsilon})} \right) &\geq (1-O(\epsilon))e^\gamma \log \left(\frac{2}{1-4\epsilon} - 1 \right) = (1-O(\epsilon))e^\gamma\log \left( \frac{1+4\epsilon}{1-4\epsilon}\right) \\
&\geq (1-O(\epsilon))8\epsilon \cdot e^\gamma,
\end{align*}
and therefore
\begin{align*}
U &\geq (1-O(\epsilon)) 32 \epsilon \cdot e^{-\gamma}\frac{W^2}{\varphi(W)^3} \frac{x}{2(\log x)^2 \log ((\log x)^{100})}.
\end{align*}
Since $\epsilon>0$ is sufficiently small we finish by reference to \eqref{eq:lower bound after vector sieve}.
\end{proof}

\begin{proof}[Proof of Proposition \ref{prop:lower bound on number of good primes}]
By \eqref{eq:lower bound mathcal S}, \eqref{eq:bound on square part mathcal S}, Lemma \ref{lem:lower bound on shifted prime}, and Lemma \ref{lem:two prime factors close together} we have
\begin{align*}
|\mathcal{S} | &\geq 31 \epsilon \cdot e^{-\gamma}\frac{W^2}{\varphi(W)^3} \frac{x}{2(\log x)^2 \log ((\log x)^{100})} - O\left(\epsilon^2\frac{W^2}{\varphi(W)^3} \frac{x}{(\log x)^2 \log \log x} \right) - O(x^{3/4+\epsilon}) \\
&\geq 30 \epsilon \cdot e^{-\gamma}\frac{W^2}{\varphi(W)^3} \frac{x}{2(\log x)^2 \log ((\log x)^{100})},
\end{align*}
the last inequality following if $\epsilon>0$ is sufficiently small and $x$ is sufficiently large depending on $\epsilon$.
\end{proof}

\section{Sieving shifted smooth numbers--the proof of Proposition \ref{prop:sigma 3 not too big}}\label{sec:sieving shifted smooth numbers}

We write $y = x^{3/10}$. Then the sum $\Sigma_3$ in \eqref{eq:defn of Sigma 3} which appears in the statement of Proposition \ref{prop:sigma 3 not too big} may be written as
\begin{align*}
\Sigma_3 &=\sum_{x^{1/4-\epsilon} < r \leq x^{1/4}(\log x)^{100}}\sum_{\substack{x/2 < p \leq x \\ p\equiv -1 (W) \\ p+2 \equiv 0 (r) \\ p+1 \text{ is } y\text{-smooth}}} \mathbf{1}_{P^-(p+2) > x^{1/4-\epsilon}}\cdot \mathbf{1}_{P^-(\frac{p+3}{2}) > (\log x)^{100}}.
\end{align*}

\begin{proof}[Proof of Proposition \ref{prop:sigma 3 not too big}]
In order to prove Proposition \ref{prop:sigma 3 not too big}, we must show
\begin{align*}
\Sigma_3 &\leq 26 \epsilon \cdot e^{-\gamma}\frac{W^2}{\varphi(W)^3} \frac{x}{2(\log x)^2 \log ((\log x)^{100})}.
\end{align*}
We change variables $p+1 =Wn$ and observe
\begin{align*}
\Sigma_3 &\leq O(1)+\sum_{x^{1/4-\epsilon} < r \leq x^{1/4}(\log x)^{100}}\sum_{\substack{x/2W < n \leq x/W \\ P^-(Wn-1)>x^{1/2-\epsilon} \\ Wn+1 \equiv 0 (r) \\ n \text{ is } y\text{-smooth}}} \mathbf{1}_{P^-(Wn+1) > x^{1/4-\epsilon}}\cdot \mathbf{1}_{P^-(\frac{Wn+2}{2}) > (\log x)^{100}}.
\end{align*}
We use sieves to control the conditions on the prime factors of $Wn-1, Wn+1$, and $\frac{Wn+2}{2}$. In considering $y$-smooth numbers in arithmetic progressions we are limited to moduli of size at most $x^{1/2-\epsilon}$, just as for the primes, and this requires us to relax some of the conditions:
\begin{align*}
\Sigma_3 &\leq O(1)+\sum_{x^{1/4-\epsilon} < r \leq x^{1/4}(\log x)^{100}}\sum_{\substack{x/2W < n \leq x/W \\ P^-(Wn-1)> x^{1/8-2\epsilon} \\ Wn+1 \equiv 0 (r) \\ n \text{ is } y\text{-smooth}}} \mathbf{1}_{P^-(Wn+1) > x^{1/8-2\epsilon}}\cdot \mathbf{1}_{P^-(\frac{Wn+2}{2}) > (\log x)^{100}}.
\end{align*}

As we have done in the proof of Lemma \ref{lem:two prime factors close together}, we apply linear upper-bound sieves to the sum, and split one of the conditions so we use the fundamental lemma for the small primes. We therefore have
\begin{align*}
\Sigma_3 -O(1) \leq \sum_{x^{1/4-\epsilon} < r \leq x^{1/4}(\log x)^{100}}&\sum_{\substack{x/2W < n \leq x/W \\ Wn+1 \equiv 0 (r) \\ n \text{ is } y\text{-smooth}}} \Big(\sum_{\substack{d_1 \mid P((\log x)^{100}) \\ \omega(d_1) \leq 2R \\ d_1 \mid Wn-1 \\ (d_1,W)=1}}\mu(d_1) \Big) \Big(\sum_{\substack{d_2 \mid P((\log x)^{100},x^{1/8-2\epsilon}) \\ d_2\leq D \\ d_2 \mid Wn-1}}\lambda_{d_2}^+ \Big) \\
&\times \Big(\sum_{\substack{e \mid P(x^{1/8-2\epsilon}) \\ e\leq D \\ e \mid Wn+1 \\ (e,W)=1}}\lambda_{e}^+ \Big)\Big(\sum_{\substack{f \mid P((\log x)^{100}) \\ \omega(f) \leq 2R \\ f \mid Wn+2 \\ (f,W)=1}}\mu(f) \Big),
\end{align*}
where $R = \lfloor (\log \log x)^2 \rfloor$, $D = x^{1/8-\epsilon}$, and $\lambda_{d_2}^+,\lambda_e^+$ are the upper-bound linear sieve weights. Let $V$ denote the sum on the right-hand side. We interchange the order of summation to obtain
\begin{align*}
V = \sum_{x^{1/4-\epsilon} < r \leq x^{1/4}(\log x)^{100}} &\sum_{\substack{d_1 \mid P((\log x)^{100}) \\ \omega(d_1) \leq 2R \\ (d_1,W)=1}} \mu(d_1) \sum_{\substack{d_2 \mid P((\log x)^{100}, x^{1/8-2\epsilon}) \\ d_2 \leq D}} \lambda_{d_2}^+\sum_{\substack{e\mid P(x^{1/8-2\epsilon}) \\ e\leq D \\ (e,d_1d_2W)=1}}\lambda_e^+ \\
&\times \sum_{\substack{f \mid P((\log x)^{100}) \\ \omega(f) \leq 2R \\ (f,d_1d_2eW)=1}} \mu(f)\sum_{\substack{x/2W < n \leq x/W \\ n \text{ is } y\text{-smooth} \\m\equiv v (d_1d_2efr)}} 1,
\end{align*}
where we used the Chinese remainder theorem to combine the congruence conditions on $n$ into a single primitive congruence class. By inclusion-exclusion and combining variables we then have
\begin{align*}
V &=\sum_{x^{1/4-\epsilon} < r \leq x^{1/4}(\log x)^{100}} \frac{1}{\varphi(r)}\sum_{\substack{e\mid P(x^{1/8-2\epsilon}) \\ e\leq D \\ (e,W)=1}}\frac{\lambda_e^+}{\varphi(e)}\sum_{\substack{d_2 \mid P((\log x)^{100}, x^{1/8-2\epsilon}) \\ d_2 \leq D \\ (d_2,e)=1}} \frac{\lambda_{d_2}^+}{\varphi(d_2)} \\
&\times \sum_{\substack{d_1 \mid P((\log x)^{100}) \\ \omega(d_1) \leq 2R \\ (d_1,d_2eW)=1}} \frac{\mu(d_1)}{\varphi(d_1)}  \sum_{\substack{f \mid P((\log x)^{100}) \\ \omega(f) \leq 2R \\ (f,d_1d_2eW)=1}} \frac{\mu(f)}{\varphi(f)} \sum_{\substack{x/2W < n \leq x/W \\ n \text{ is } y\text{-smooth} \\ (n,rd_1d_2ef)=1}} 1 + O(E),
\end{align*}
where
\begin{align*}
E &= \max_{z \in \{x/2W,x/W\}}\sum_{g\leq x^{1/2-\epsilon}}  \tau(g)^4\max_{(a,g)=1} \Big| \sum_{\substack{n\leq z \\ n \text{ is } y\text{-smooth} \\ n\equiv a (g)}} 1- \frac{1}{\varphi(g)}\sum_{\substack{n\leq z \\ n \text{ is } y\text{-smooth} \\ (n,g)=1}} 1 \Big|.
\end{align*}
By Cauchy-Schwarz, the trivial bound
\begin{align*}
\Big| \sum_{\substack{n\leq z \\ n \text{ is } y\text{-smooth} \\ n\equiv a (g)}} 1- \frac{1}{\varphi(g)}\sum_{\substack{n\leq z \\ n \text{ is } y\text{-smooth} \\ (m,g)=1}} 1 \Big| \ll \frac{z}{\varphi(g)},
\end{align*}
and the Bombieri-Vinogradov theorem for $y$-smooth numbers \cite[Corollaire 1]{FT1996} we obtain
\begin{align*}
E \ll \frac{x}{(\log x)^{100}}.
\end{align*}

The condition $(n,r)=1$ may be removed at the cost of a negligible error. We note that the sum over $r$ satisfies
\begin{align*}
\sum_{x^{1/4-\epsilon} < r \leq x^{1/4}(\log x)^{100}} \frac{1}{\varphi(r)} &= O(x^{-1/4+\epsilon}) + \sum_{x^{1/4-\epsilon} < r \leq x^{1/4}(\log x)^{100}} \frac{1}{r} \\ 
&= -\log\left(1-4\epsilon \right) + O\left( \frac{\log \log x}{\log x}\right)\leq (1+O(\epsilon))4\epsilon.
\end{align*}
We rearrange to make the sum over $n$ the outermost summation. We then successively evaluate the sums over $f$, then $d_1$, via arguments and estimations like those in the proofs of Lemmas \ref{lem:two prime factors close together} and \ref{lem:lower bound on shifted prime}. We can remove the condition that $d_2$ and $e$ are coprime since $d_2$ is only supported on integers with prime factors $> (\log x)^{100}$. The sums over $e$ and $d_2$ are then independent. The condition that $d_2$ and $n$ are coprime may similarly be removed. We use Lemma \ref{lem:linear sieve} to get an upper bound for the sum over $d_2$ and obtain
\begin{align*}
V &\leq (1+O(\epsilon))4\epsilon \frac{W^2}{\varphi(W)^2} \prod_{p > D_0 }(1+O(p^{-2})) \prod_{p\leq (\log x)^{100}} \left(1 - \frac{1}{p} \right) F \left(\frac{\log D}{\log (x^{1/8-2\epsilon})} \right)  \\
&\times \prod_{p\leq x^{1/8-2\epsilon}} \left( 1 - \frac{1}{p}\right)\sum_{\substack{x/2W < n \leq x/W \\ n \text{ is } y\text{-smooth}}} j(n) \sum_{\substack{e\mid P(x^{1/8-2\epsilon}) \\ e\leq D \\ (e,nW)=1}}\frac{\lambda_e^+}{\varphi(e)}j(e),
\end{align*}
where $j(n)$ is the multiplicative function given by 
\begin{align*}
j(n) = \prod_{\substack{p\mid n \\ p > D_0}} \frac{p-1}{p-3}.
\end{align*}
Using Lemma \ref{lem:linear sieve} again to evaluate the sum over $e$ and doing a bit of easy estimation, we obtain
\begin{align*}
V &\leq (1+O(\epsilon))4\epsilon \frac{W^3}{\varphi(W)^3} \frac{e^{-\gamma}}{\log((\log x)^{100})} \frac{\left(\frac{1}{8}-2\epsilon \right)^{-2}}{(\log x)^2}e^{-2\gamma}F \left(\frac{\log D}{\log (x^{1/8-2\epsilon})} \right)^2\sum_{\substack{x/2W < n \leq x/W \\ n \text{ is } y\text{-smooth}}} k(n),
\end{align*}
where $k(n)$ is the multiplicative function given by $k(n) = \prod_{\substack{p\mid n \\ p > D_0}} \frac{p-1}{p-4}$. The function $F(s)$ is given by $F(s) = 2e^{\gamma}s^{-1}$ for $1 \leq s \leq 3$, and therefore
\begin{align*}
V &\leq (1+O(\epsilon))1024\epsilon \frac{W^3}{\varphi(W)^3} \frac{e^{-\gamma}}{(\log x)^2\log((\log x)^{100})}\sum_{\substack{x/2W < n \leq x/W \\ n \text{ is } y\text{-smooth}}} k(n).
\end{align*}

Since $\frac{p-1}{p-4} = 1 + \frac{3}{p-4} = 1 + \frac{3}{p} + O(p^{-2})$ we see that
\begin{align*}
k(n) &\leq (1+O(D_0^{-1})) \prod_{\substack{p\mid n \\ p > D_0}}\left(1 + \frac{3}{p}\right) = (1+O(D_0^{-1})) \sum_{\substack{d\mid n \\ (d,W)=1}}\frac{\mu^2(d)3^{\omega(d)}}{d}.
\end{align*}
Therefore, by interchanging the order of summation we deduce
\begin{align*}
\sum_{\substack{x/2W < n \leq x/W \\ n \text{ is } y\text{-smooth}}} k(n) &\leq (1+O(D_0^{-1}))\sum_{(d,W)=1} \frac{\mu^2(d)3^{\omega(d)}}{d}\sum_{\substack{x/2dW < n \leq x/dW \\ n \text{ is } y\text{-smooth}}}  1
\end{align*}
by a change of variables. The contribution from $d > (\log x)^{10}$ is acceptably small by trivial estimation, so we may assume $d\leq (\log x)^{10}$.

Classical results \cite[Theorem 1]{Hil1986} on counts for smooth numbers yield
\begin{align*}
\Psi(x,y) = x\rho(u)\left(1 + O \left( \frac{\log(u+1)}{\log y}\right) \right),
\end{align*}
with $u = \frac{\log x}{\log y}$ provided $y\geq 2$ and $u\leq (\log x)^{1/10}$, say. In our case we have $u = O(1)$, so we may write $\Psi(x,y) = x\rho(u) + O(x/\log x)$. We apply this with $x \rightarrow x/dW$ and $x \rightarrow x/2dW$ and subtract to get an estimate for $\sum_{\substack{x/2dW < n \leq x/dW \\ n \text{ is } y\text{-smooth}}}  1$. The error term contributes an acceptably small amount, since the sum $\sum_{d\geq 1}\frac{\mu^2(d)3^{\omega(d)}}{d^2}$ is convergent. Using the differential delay equation \eqref{eq:dickman rho differential delay} for $\rho$ we find
\begin{align*}
\rho\Big(\frac{\log(x/dW)}{\log y} \Big) = \rho \Big( \frac{\log x}{\log y}\Big) + O\Big(\frac{\log \log x}{\log x} \Big),
\end{align*}
and similarly with $x/dW$ replaced by $x/2dW$.

We have therefore shown
\begin{align*}
\sum_{\substack{x/2W < n \leq x/W \\ n \text{ is } y\text{-smooth}}} k(n) &\leq (1+O(\epsilon))\frac{x}{2W}\rho \left( \frac{\log x}{\log y}\right)\sum_{(d,W)=1} \frac{\mu^2(d)3^{\omega(d)}}{d^2}\leq (1+O(\epsilon))\frac{x}{2W}\rho \left( \frac{\log x}{\log y}\right),
\end{align*}
and this implies
\begin{align*}
\Sigma_3 &\leq (1+O(\epsilon))1024 \rho \left( \frac{\log x}{\log y}\right)\epsilon \cdot e^{-\gamma}\frac{W^2}{\varphi(W)^3}\frac{x}{2(\log x)^2\log((\log x)^{100})}.
\end{align*}
With $y = x^{3/10}$ we claim that $\rho(\frac{\log x}{\log y}) = \rho(10/3)\leq 0.025$. Since $1024 \cdot 0.025 = 25.6 < 26$ we obtain
\begin{align*}
\Sigma_3 &\leq 26\epsilon \cdot e^{-\gamma}\frac{W^2}{\varphi(W)^3}\frac{x}{2(\log x)^2\log((\log x)^{100})}.
\end{align*}

To see that $\rho(10/3) \leq 0.025$, one may use \eqref{eq:dickman rho differential delay} to show that
\begin{align*}
\rho(10/3) &= 1-\log(10/3) + \int_1^{7/3}\frac{\log u}{u+1} du - \int_1^{4/3}\frac{\log u}{u+1}(\log (10/3)-\log(u+2))du \\
&\leq 1-\log(10/3) + \int_1^{7/3}\frac{\log u}{u+1} du
\end{align*}
and then basic numerical integration establishes the inequality.
\end{proof}

\section{Exponential sums I--The proof of Proposition \ref{prop:sigma 1 is small}}\label{sec:sigma 1 is small}

We recall the definition of $\Sigma_1$ from \eqref{eq:defn of Sigma 1}:
\begin{align*}
\Sigma_1 &= \sum_{\substack{x/2 < p \leq x \\ p\equiv -1 (W) \\ \|\frac{\sigma_4(p+1)}{p(p+1)} + \frac{1}{16} \| \leq (\log x)^{-100}}} \mathbf{1}_{P^-(p+2) > x^{1/4}(\log x)^{100}}\cdot \mathbf{1}_{P^-(\frac{p+3}{2}) > (\log x)^{100}}.
\end{align*}

The first step in proving Proposition \ref{prop:sigma 1 is small} is to show that $p+1$ has a prime factor of ``convenient'' size. That is, it is rare for $p+1$ to have no prime factor between $x^{\epsilon^3}$ and $x^\epsilon$.

\begin{lemma}[Few shifted primes lack convenient factor]\label{lem:sigma 1 p plus 1 has a convenient factor}
For $\epsilon > 0$ sufficiently small and $x$ sufficiently large we have
\begin{align*}
\Sigma_1' := \sum_{\substack{x/2 < p \leq x \\ p\equiv -1 (W)\\ (p+1, P(x^{\epsilon^3},x^\epsilon))=1}} \mathbf{1}_{P^-(p+2) > x^{1/4}(\log x)^{100}}\cdot \mathbf{1}_{P^-(\frac{p+3}{2}) > (\log x)^{100}} \ll \epsilon^2\frac{W^2}{\varphi(W)^3} \frac{x}{(\log x)^2 \log\log x}.
\end{align*}
\end{lemma}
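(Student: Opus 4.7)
I would prove this by following the sieve scheme of Lemma \ref{lem:two prime factors close together}, augmented with one additional upper-bound sieve on $n+1$. The key quantitative observation is that by Mertens' theorem,
\begin{align*}
\prod_{x^{\epsilon^3} < q \le x^\epsilon}\left(1 - \frac{1}{q}\right) \asymp \frac{\log(x^{\epsilon^3})}{\log(x^\epsilon)} = \epsilon^2,
\end{align*}
which produces exactly the required savings of $\epsilon^2$ over the bound in Lemma \ref{lem:two prime factors close together}.

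Concretely, as in Lemma \ref{lem:two prime factors close together} I would first relax the primality of $p$ to $P^-(n) > x^{1/100}$, and relax $P^-(n+2) > x^{1/4}(\log x)^{100}$ to $P^-(n+2) > x^{1/100}$. I then apply the same four sieve weights as in that lemma---the fundamental lemma \eqref{eq:fundamental lemma} to $n$ and to $n+3$ for primes $\le (\log x)^{100}$, and linear upper-bound sieves to $n$ and to $n+2$ for primes in $((\log x)^{100}, x^{1/100}]$ at level $D = x^{1/10}$---together with one new linear upper-bound sieve on $n+1$ at level $D' = x^{3\epsilon}$ sifting primes in $(x^{\epsilon^3}, x^\epsilon]$. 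Interchanging summations and invoking the Chinese Remainder Theorem reduces the inner sum to one over an arithmetic progression of modulus at most $D^2 D' (\log x)^{O(1)} \le x^{1/2-\epsilon}$, so the Bombieri-Vinogradov theorem yields a negligible error $\ll x(\log x)^{-100}$.

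The main term factors into a product of sieve densities. The four ``old'' sieves, after the Mertens-type evaluations involving the multiplicative function $h$ from \eqref{eq:defn of h} that already appears in Lemma \ref{lem:two prime factors close together}, contribute an absolute constant times $\frac{W^2}{\varphi(W)^3}\frac{x}{(\log x)^2 \log \log x}$. The new sieve on $n+1$ decouples from the others---any prime shared between the $n+1$ sieve variable and the sieves on $n, n+2, n+3$ would force a trivial divisibility such as $q \mid 1$ or $q \mid 2$, and any accidental overlap with the $d_1$ or $f$ variables supported on primes $\le (\log x)^{100}$ can be removed with error $O((\log x)^{-50})$ exactly as in Lemma \ref{lem:two prime factors close together}. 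Lemma \ref{lem:linear sieve} applied at $s = \log D'/\log(x^\epsilon) = 3$ bounds the contribution of this new sieve above by
\begin{align*}
F(3) \prod_{x^{\epsilon^3} < q \le x^\epsilon}\left(1 - \frac{1}{q}\right) \ll \epsilon^2,
\end{align*}
and multiplying these contributions together yields the claimed bound.

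The main obstacle is bookkeeping: one must carefully verify that the new sieve variable decouples cleanly from the four existing ones and that the combined sieve level stays within the Bombieri-Vinogradov range. Both are straightforward because the new sifting range $(x^{\epsilon^3}, x^\epsilon]$ is bounded, the new level $D' = x^{3\epsilon}$ is very small compared to $x^{1/10}$, and the new range of primes is disjoint from all other sieve ranges. The remainder of the computation is essentially a routine extension of the derivation in Lemma \ref{lem:two prime factors close together}, with no new analytic input required.
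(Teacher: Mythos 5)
Your proposal is correct and matches the paper's argument in all essentials: the proof is exactly an extra upper-bound linear sieve on $p+1$ over the primes in $(x^{\epsilon^3},x^{\epsilon}]$, with the factor $\prod_{x^{\epsilon^3}<q\le x^{\epsilon}}(1-1/q)\asymp\epsilon^2$ from Mertens supplying the saving. The only (immaterial) difference is that the paper keeps the primality of $p$ and applies Bombieri--Vinogradov with sieve weights only on $p+1$, $p+2$, $p+3$, whereas you relax primality to a sieve condition on $n$ --- in which case the inner count over a progression is trivial and the appeal to Bombieri--Vinogradov is actually unnecessary; either variant closes the argument.
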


We also need an estimate for exponential sums of a certain shape.

\begin{lemma}[Basic exponential sum estimate]\label{lem:sigma1 exp sum}
Let $Q\geq 1$ be sufficiently large, and let $U\geq 5$ be fixed. Let $A, B$ be real numbers with $Q^5 \leq |A| \leq Q^U$ and $|B| \leq |A|$. Then
\begin{align*}
\left|\sum_{n\asymp Q} e \left(A(n^2+n^{-2}) + B(n+n^{-3}) \right) \right| \ll_{C,U} \frac{Q}{(\log Q)^C}
\end{align*}
for any fixed $C>0$.
\end{lemma}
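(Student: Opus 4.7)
The phase $\varphi(n) := A(n^2 + n^{-2}) + B(n + n^{-3})$ has a striking two-regime behaviour: on $n \asymp Q$ its second derivative $\varphi''(n) = 2A + O(A Q^{-4})$ is huge (of size $\asymp A$) and governed by the $An^2$ part, whereas for every $k \geq 3$ the explicit formula
\[
\varphi^{(k)}(n) = (-1)^k A(k+1)!\,n^{-k-2} + (-1)^k B\,\tfrac{(k+2)!}{2}\,n^{-k-3}
\]
shows that $\varphi^{(k)}$ is dominated by the $An^{-2}$ contribution. Using $|B| \leq |A|$ and $Q$ large, this gives the two-sided bound $|\varphi^{(k)}(n)| \asymp_k A/Q^{k+2}$ uniformly on $n \asymp Q$, and the main term $(-1)^k A(k+1)! n^{-k-2}$ has constant sign there. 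This two-sided control of $\varphi^{(k)}$ is the smoothness input needed for a van der Corput-type estimate.

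The plan is then to apply the classical van der Corput $k$-th derivative test directly to $S$, tuning the order $k$ to the size of $A$. Concretely, take $k := \lceil \log_Q A \rceil - 1 \in \{4, \ldots, U\}$, so that $\lambda := |\varphi^{(k)}(n)| \asymp A/Q^{k+2}$ lies in the sweet spot $[Q^{-2}, Q^{-1}]$. The test gives
\[
|S| \ll_U Q\,\lambda^{1/(2^k - 2)} + Q^{1 - 2^{2-k}}\,\lambda^{-1/(2^k - 2)} \ll_U Q^{1 - c_U}
\]
for some $c_U > 0$ depending only on $U$; a direct computation with the exponents yields $c_U \gtrsim 2^{-U}$, which is positive for $U$ fixed. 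Since any such power saving is far stronger than $Q/(\log Q)^C$, this yields the conclusion of the lemma once $Q$ is large in terms of $U$ and $C$.

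The main technical point is the uniform applicability of the derivative test across the full range $A \in [Q^5, Q^U]$: one must verify the two-sided derivative bound and the constancy of sign of $\varphi^{(k)}$ on $[Q,2Q]$ for each of the finitely many $k \in \{4,\ldots,U\}$ that can arise as $A$ varies. Both verifications follow immediately from the closed-form expression for $\varphi^{(k)}$, since the $A$-term dominates the $B$-correction by a factor $\gg Q$. Because $k \leq U$ is bounded, the implied constants in the $k$-th derivative test depend only on $U$, and hence so does $c_U$ and the final implied constant, as the lemma requires.
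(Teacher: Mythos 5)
Your proof is correct, and it takes a genuinely different (and in fact stronger) route than the paper. The paper first performs two explicit rounds of Weyl--van der Corput differencing with shift lengths $K=\lfloor(\log Q)^{2C}\rfloor$ and $L=\lfloor(\log Q)^{4C}\rfloor$; the double differencing annihilates the polynomial part $An^2+Bn$, leaving a phase built from the negative powers whose derivatives are all comparable to $A|k\ell|Q^{-4}\cdot Q^{-j}$, to which \cite[Theorem 8.4]{IK2004} is then applied. That formulation of the derivative test assumes the phase behaves like $F\,g(n/N)$ with \emph{all} derivatives scaling as $FN^{-j}$, which is why the pre-differencing is needed there, and it is also why the paper only obtains the saving $(\log Q)^{-C}$ (limited by the differencing lengths). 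You instead exploit the fact that the classical $k$th derivative test in the Titchmarsh/Graham--Kolesnik form \cite[Theorem 2.9]{GK1991} constrains \emph{only} $f^{(k)}$, with no hypothesis on lower derivatives; since $\varphi^{(k)}$ for $k\geq 3$ sees only the $An^{-2}+Bn^{-3}$ part, you may apply the test directly with $k$ tuned so that $|\varphi^{(k)}|\asymp |A|Q^{-k-2}\in[Q^{-2},Q^{-1}]$, and your exponent computation (both terms save a power once $k\geq 3$, uniformly over the boundedly many $k\leq U$) is right, as are the sign-constancy and two-sided comparability checks. This buys a power saving $Q^{1-c_U}$ rather than a log saving, and a shorter argument; the only point to make explicit in a write-up is that you are invoking the Graham--Kolesnik/Titchmarsh version of the test rather than the \cite[Theorem 8.4]{IK2004} version cited in the paper, since the latter's hypotheses fail for $\varphi$ itself (its second derivative is of size $|A|\geq Q^5$, wildly out of scale with the higher ones).
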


We finish the proof of Proposition \ref{prop:sigma 1 is small} contingent upon Lemmas \ref{lem:sigma 1 p plus 1 has a convenient factor} and \ref{lem:sigma1 exp sum}, and then prove the two lemmas.

\begin{proof}[Proof of Proposition \ref{prop:sigma 1 is small} assuming Lemmas \ref{lem:sigma 1 p plus 1 has a convenient factor} and \ref{lem:sigma1 exp sum}]
Note that
\begin{align*}
\frac{\sigma_4(p+1)}{p(p+1)} = \frac{\sigma_4(p+1)}{(p+1)^2}+\frac{\sigma_4(p+1)}{(p+1)^3} + \frac{\sigma_4(p+1)}{(p+1)^4} + O(x^{-1}),
\end{align*}
so condition \eqref{eq:p+1 frac part with no small prime factors} becomes
\begin{align*}
\left\|\frac{\sigma_4(p+1)}{(p+1)^2}+\frac{\sigma_4(p+1)}{(p+1)^3} + \frac{\sigma_4(p+1)}{(p+1)^4} + \frac{1}{16} \right\| \leq (\log x)^{-99},
\end{align*}
say.

We split the sum over $p$ in $\Sigma_1$ according to whether or not $p+1$ has a prime divisor $q$ in $(x^{\epsilon^3},x^\epsilon]$. Lemma \ref{lem:sigma 1 p plus 1 has a convenient factor} shows that the contribution of those $p$ with $p+1$ having no prime factor in this range is negligible. For the sum over $p$ in which $p+1$ does have a convenient prime factor $q$, we no longer need to keep track of the condition that $p$ is prime, or that $p+2$ and $\frac{p+3}{2}$ have no small prime factors. If we change variables $mq=p+1$ and drop conditions by positivity we have
\begin{align}\label{eq:sigma 1 is error plus sigma 1 circ}
\Sigma_1 &\leq O\left(\epsilon^2\frac{W^2}{\varphi(W)^3} \frac{x}{(\log x)^2 \log \log x} \right) + \Sigma_1^\circ,
\end{align}
where
\begin{align*}
\Sigma_1^\circ &:= \mathop{\sum\sum}_{\substack{x/4 < m \leq 2x \\ x^{\epsilon^3} < q \leq x^\epsilon \\ q \mid m \\ \|\frac{\sigma_4(mq)}{(mq)^2} + \frac{\sigma_4(mq)}{(mq)^3}  + \frac{\sigma_4(mq)}{(mq)^4} + \frac{1}{16} \|\leq (\log x)^{-50}}} 1.
\end{align*}
By trivial estimation we may assume $q$ and $m$ are coprime, and then by multiplicativity we have $\sigma_4(mq)=\sigma_4(m)\sigma_4(q)$. An important point now is that
\begin{align*}
\frac{\sigma_4(q)}{q^2} = q^2 + q^{-2}, \ \ \ \ \ \ \frac{\sigma_4(q)}{q^3} = q + q^{-3}.
\end{align*}
After dropping the condition $(m,q)=1$ by positivity, we have
\begin{align*}
\Sigma_1^\circ &\leq \mathop{\sum\sum}_{\substack{x/4 < mq \leq 2x \\ x^{\epsilon^3} < q \leq x^\epsilon \\ \| \frac{\sigma_4(m)}{m^2}(q^2+q^{-2}) + \frac{\sigma_4(m)}{m^3}(q+q^{-3}) + \frac{\sigma_4(m)}{m^4} + \frac{1}{16}\| \leq (\log x)^{-10}}} 1.
\end{align*}
We put $q$ into dyadic intervals $q \asymp Q$ and then drop the condition that $q$ is prime, so that
\begin{align*}
\Sigma_1^\circ &\ll (\log x) \max_{x^{\epsilon^3} \ll Q \ll x^\epsilon} \mathop{\sum\sum}_{\substack{m\asymp x/Q \\ n \asymp Q \\ \| \frac{\sigma_4(m)}{m^2}(n^2+n^{-2}) + \frac{\sigma_4(m)}{m^3}(n+n^{-3}) + \frac{\sigma_4(m)}{m^4} + \frac{1}{16}\| \leq (\log x)^{-10}}} 1.
\end{align*}

We can, at last, handle the condition on $\| \cdot \|$. We insert a smooth, non-negative, 1-periodic function $w(y)$ with $\mathbf{1}_{\|y\| \leq (\log x)^{-10}} \leq w(y)$. We may choose $w$ so that $\widehat{w}(0) \ll (\log x)^{-10}$ and $w^{(j)}(y) \ll_j (\log x)^{10j}$. By repeated integration by parts we have the bound
\begin{align*}
\widehat{w}(h) = \int_0^1 w(y) e(-hy) dy \ll_j (\log x)^{-10} \left(1 + \frac{|h|}{(\log x)^{10}} \right)^{-j}
\end{align*}
for any integral $j\geq 0$, and by Fourier expansion we obtain
\begin{align*}
\Sigma_1^\circ \ll x(\log x)^{-100}+ &(\log x)\max_{x^{\epsilon^3} \ll Q \ll x^\epsilon} \sum_{|h| \leq (\log x)^{20}}|\widehat{w}(h)| \\
&\times\sum_{m\asymp x/Q}\left|\sum_{n\asymp Q} e \left(h \left(\frac{\sigma_4(m)}{m^2}(n^2+n^{-2}) + \frac{\sigma_4(m)}{m^3}(n+n^{-3}) \right)\right) \right|.
\end{align*}
The contribution of $h=0$ is sufficiently small for Proposition \ref{prop:sigma 1 is small}. For each $h$ with $1\leq |h| \leq (\log x)^{20}$ we apply Lemma \ref{lem:sigma1 exp sum} with $A = h\frac{\sigma_4(m)}{m^2}$, $B = h\frac{\sigma_4(m)}{m^3}$, and $C\geq 1$ sufficiently large. It follows that $\Sigma_1^\circ \ll x(\log x)^{-9}$ and by \eqref{eq:sigma 1 is error plus sigma 1 circ} we have
\begin{align*}
\Sigma_1 &\ll \epsilon^2\frac{W^2}{\varphi(W)^3} \frac{x}{(\log x)^2 \log \log x}. \qedhere
\end{align*}
\end{proof}

\begin{proof}[Proof of Lemma \ref{lem:sigma 1 p plus 1 has a convenient factor}]
We relax the condition $P^-(p+2)$ and insert upper-bound sieves via \eqref{eq:linear sieve inequalities} and \eqref{eq:fundamental lemma}. If $D = x^{1/10}$, and $R = \lfloor (\log \log x)^2 \rfloor$, then
\begin{align*}
\Sigma_1' &\leq \sum_{\substack{x/2 < p \leq x \\ p\equiv -1 (W)}}  \Big(\sum_{\substack{d\mid P(x^{1/100}) \\ d\leq D \\ d \mid p+2 \\ (d,W)=1}}\lambda_d^+ \Big)\Big(\sum_{\substack{e\mid P(x^{\epsilon^3},x^\epsilon) \\ e\leq D \\ e\mid p+1}}\lambda_e^+ \Big)\Big(\sum_{\substack{f\mid P((\log x)^{100}) \\ \omega(f) \leq 2R \\ f\mid p+3 \\ (f,W)=1}} \mu(f)\Big).
\end{align*}
We interchange the order of summation, and note that the variables $d,e$, and $f$ are pairwise coprime. We may use the Bombieri-Vinogradov to bound the error for counting primes in arithmetic progressions as in the proof of Lemma \ref{lem:lower bound on shifted prime} and obtain
\begin{align*}
\Sigma_1' &\leq \frac{\pi(x)-\pi(x/2)}{\varphi(W)}\sum_{\substack{d\mid P(x^{1/100}) \\ d\leq D \\ (d,W)=1}}\frac{\lambda_d^+}{\varphi(d)} \sum_{\substack{e\mid P(x^{\epsilon^3},x^\epsilon) \\ e\leq D \\ (e,dW)=1}}\frac{\lambda_e^+}{\varphi(e)}\sum_{\substack{f\mid P((\log x)^{100}) \\ \omega(f) \leq 2R \\ (f,deW)=1}}\frac{\mu(f)}{\varphi(f)} + O \left(\frac{x}{(\log x)^{100}} \right).
\end{align*}
By the usual arguments we then obtain
\begin{align*}
\Sigma_1' &\ll \frac{x}{\varphi(W)\log x} \prod_{p > D_0}(1+O(p^{-2}))\prod_{D_0 < p \leq x^{1/100}} \left(1 - \frac{1}{p}\right) \prod_{x^{\epsilon^3} < p \leq x^\epsilon}\left(1 - \frac{1}{p}\right)\prod_{D_0 < p \leq (\log x)^{100}} \left(1 - \frac{1}{p}\right) \\
&\ll \epsilon^2\frac{W^2}{\varphi(W)^3} \frac{x}{(\log x)^2 \log \log x},
\end{align*}
the last inequality following from several applications of Mertens' theorem.
\end{proof}

\begin{proof}[Proof of Lemma \ref{lem:sigma1 exp sum}]
We may assume without loss of generality that $Q$ is sufficiently large compared to $C$ and $U$. Let $E$ denote the sum over $n$ which we wish to bound. The first step is to apply a few steps of Weyl-van der Corput differencing to remove the positive powers of $n$ from the phase function. Write $K = \lfloor (\log Q)^{2C} \rfloor$. By \cite[Proposition 8.18]{IK2004} we deduce
\begin{align*}
|E|^2 &\ll \frac{Q^2}{K} + \frac{Q}{K}\sum_{1\leq |k| \leq K} \left|\sum_{n \asymp Q}e \left(A(2nk+(n+k)^{-2}-n^{-2}) + B((n+k)^{-3}-n^{-3}) \right) \right|.
\end{align*}
We then take the maximum over $k$ to obtain
\begin{align*}
|E|^2 &\ll \frac{Q^2}{K} + Q \left|\sum_{n \asymp Q}e \left(A(2nk+(n+k)^{-2}-n^{-2}) + B((n+k)^{-3}-n^{-3}) \right) \right|
\end{align*}
for some non-zero $k$ with $1\leq |k| \leq K$.

Now write $L = \lfloor (\log Q)^{4C}\rfloor$ and perform Weyl-van der Corput differencing again. We then have
\begin{align*}
|E|^4 &\ll \frac{Q^4}{K^2} + \frac{Q^4}{L} + Q^3\left|\sum_{n \asymp Q} e(f_\ell(n)) \right|
\end{align*}
for some $\ell$ with $1\leq |\ell| \leq L$, where
\begin{align*}
f_\ell(n) &= A \left((n+\ell+k)^{-2}-(n+k)^{-2} - (n+\ell)^{-2}+n^{-2} \right) \\
&+ B\left((n+\ell+k)^{-3}-(n+k)^{-3} - (n+\ell)^{-3}+n^{-3} \right).
\end{align*}
By the fundamental theorem of calculus we find
\begin{align*}
f_\ell(n) &= 6A \int_0^k\int_0^\ell \frac{1}{(n+s+t)^4} ds  dt + 12B\int_0^k\int_0^\ell \frac{1}{(n+s+t)^5} ds  dt,
\end{align*}
and since $|B| \leq A$ we see the derivatives of $f_\ell(n)$ satisfy
\begin{align*}
\Big|f_\ell^{(j)}(n) \Big| \asymp_J \frac{A|k\ell|}{Q^4} Q^{-j}, \ \ \ \ \ 0\leq j \leq J,
\end{align*}
where $J$ is any large, fixed constant. We claim there exists a positive constant $c=c(C,U)$ depending on $C$ and $U$ such that 
\begin{align*}
\left|\sum_{n \asymp Q} e(f_\ell(n)) \right| \ll_{C,U} Q^{1-c}
\end{align*}
since $Q^5 \leq |A| \leq Q^U$. Indeed, this follows from \cite[Theorem 8.4]{IK2004} with $F\asymp A|k\ell|Q^{-4}\gg Q$ and taking $k$ there sufficiently large depending on $C,U$ (see also \cite[Theorem 2.9]{GK1991}). It follows that $|E|^4 \ll_{C,U} \frac{Q^4}{K^2} + \frac{Q^4}{L} + Q^{4-c}$ and this suffices for the proof.
\end{proof}

\section{Exponential sums II--the proof of Proposition \ref{prop:sigma 4 is small}}\label{sec:sigma 4 is small}

From \eqref{eq:defn of Sigma 4} we have
\begin{align*}
\Sigma_4 &= \sum_{x^{1/4-\epsilon} < r \leq x^{1/4}(\log x)^{100}}\sum_{\substack{x/2 < p \leq x \\ p\equiv -1 (W) \\ p+2 \equiv 0 (r) \\ p+1 \text{ not } x^{3/10}\text{-smooth} \\ \|\frac{\sigma_4(p+1)}{p(p+1)} + \frac{p+1}{r^4} + \frac{1}{16} \| \leq (\log x)^{-100}}} \mathbf{1}_{P^-(p+2) > x^{1/4-\epsilon}}\cdot \mathbf{1}_{P^-(\frac{p+3}{2}) > (\log x)^{100}},
\end{align*}
and to prove Proposition \ref{prop:sigma 4 is small} we must show
\begin{align*}
\Sigma_4 &\ll \epsilon^2\frac{W^2}{\varphi(W)^3} \frac{x}{(\log x)^2 \log\log x}.
\end{align*}

We need the following lemmas on exponential sums.

\begin{lemma}[Exponential sum estimate for small $Q$]\label{lem:sigma 4 exp sum, Q less than 5 12}
Let $\epsilon > 0$ be sufficiently small, and let $x$ be sufficiently large depending on $\epsilon$. Let $r$ be an integer satisfying $x^{1/4-\epsilon}\leq r \leq x^{1/4+\epsilon}$, and let $Q$ satisfy $x^{3/10}\ll Q \ll x^{5/12-100\epsilon}$. Let $m \asymp x/Q$ and $1\leq |h| \leq (\log x)^{100}$ be integers. Then for any integer $v = v(r)$ with $|v| \leq r$ we have
\begin{align*}
\left|\sum_{\substack{\ell \asymp Q/r}} e\left(h \left(\frac{\sigma_4(m)}{m^2}(2vr\ell + r^2 \ell^2+(v+r\ell)^{-2}) + \frac{\sigma_4(m)}{m^3}r\ell + \frac{m\ell}{r^3} \right) \right) \right| \ll_{C,\epsilon} \frac{Q}{r (\log x)^C}.
\end{align*}
\end{lemma}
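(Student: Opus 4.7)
The plan is to mimic the strategy of Lemma \ref{lem:sigma1 exp sum}. Write the phase inside the exponential as $\Phi(\ell) = Ar^2\ell^2 + D\ell + A(v+r\ell)^{-2}$, where $A = h\sigma_4(m)/m^2$ and $D$ collects all the linear-in-$\ell$ coefficients coming from $2vr\ell$, $r\ell$, and $m\ell/r^3$. Because $|v| \leq r \ll x^{1/4+\epsilon}$ while $r\ell \asymp Q \gg x^{3/10}$, one has $v+r\ell \asymp r\ell \asymp Q$ throughout the range of summation, so the inverse-square part is well-defined and behaves as a smooth perturbation of the quadratic polynomial.

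I would then apply Weyl--van der Corput differencing twice, with parameters $K_1,K_2$ to be optimized, setting $N = Q/r$. The quadratic part $Ar^2\ell^2$ contributes only the constant $2Ar^2 k_1 k_2$, which factors out of the exponential, and the linear part $D\ell$ is annihilated. What remains inside the exponential is $\psi(\ell) := A[(v+r\ell+rk_1+rk_2)^{-2} - (v+r\ell+rk_1)^{-2} - (v+r\ell+rk_2)^{-2} + (v+r\ell)^{-2}]$. Applying the fundamental theorem of calculus twice (exactly as in the proof of Lemma \ref{lem:sigma1 exp sum}) rewrites this as $\psi(\ell) = 6Ar^2 \int_0^{k_1}\!\!\int_0^{k_2}(v+r\ell+r(s+t))^{-4}\,ds\,dt$. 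Differentiating under the integral sign, I obtain $|\psi^{(j)}(\ell)| \asymp_j F/N^j$ for $\ell \asymp N$, where $F \asymp |A|\,k_1k_2\,r^2/N^4 \asymp h k_1 k_2 x^{5/2}/Q^6$.

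The crucial point is that $F$ is polynomially bounded in both directions in terms of $N$. The upper bound $F \ll h k_1 k_2 N^{14}$ follows from $Q \gg x^{3/10}$, which forces $N \gg x^{1/20-\epsilon}$ and hence $x \ll N^{20}$. The lower bound $F \gg x^{600\epsilon}$ (more than enough to beat any power of $\log x$) follows from $Q \ll x^{5/12-100\epsilon}$. Both bounds are needed to invoke the $k$-th derivative van der Corput test from \cite[Theorem 8.4]{IK2004} with a fixed $k$ slightly exceeding $14$; this produces $\bigl|\sum_{\ell \asymp N} e(\psi(\ell))\bigr| \ll N^{1-\delta}$ for some $\delta = \delta(k) > 0$. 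Plugging back into the Weyl--van der Corput inequality gives $|E|^4 \ll N^4/K_1^2 + N^4/K_2 + N^{4-\delta}$, and the choice $K_1 = K_2 = N^{\delta/4}$ yields $|E| \ll N^{1-\delta/8}$. Since $N \gg x^{1/20-\epsilon}$, any such power saving in $N$ dominates $(\log x)^{-C}$ for any fixed $C$, which is the asserted bound $Q/(r(\log x)^C)$.

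The main obstacle is ensuring the two-sided bound on $F$ holds uniformly across the whole stated range of $Q$ with a single choice of $k$. The hypothesis $Q \gg x^{3/10}$ plays a dual role: it keeps $|v+r\ell|$ of size $Q$ (so the integral representation is genuine), and it enforces $F \ll h k_1 k_2 N^{14}$; the hypothesis $Q \ll x^{5/12-100\epsilon}$ ensures $F \gg x^{600\epsilon}$, which controls the second term in the van der Corput bound. Both constraints are genuinely needed, and the range of $Q$ in the lemma is essentially optimized to admit this two-differencing-plus-high-derivative-test approach.
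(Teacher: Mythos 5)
Your argument is essentially identical to the paper's proof: two rounds of Weyl--van der Corput differencing kill the quadratic and linear parts of the phase, the fundamental theorem of calculus gives the differenced inverse-square phase as $6Ar^2\int_0^{j}\!\int_0^{k}(v+r(\ell+s+t))^{-4}\,ds\,dt$ with derivative size $F\asymp |hjk|x^2r^2/Q^6\asymp_\epsilon |hjk|x^{5/2}/Q^6$, and the two hypotheses on $Q$ furnish exactly the lower bound $F\gg x^{O(\epsilon)}$ and the upper bound $F\ll N^{O(1)}$ needed for the $k$-th derivative test of \cite[Theorem 8.4]{IK2004}. The only discrepancies are cosmetic: your intermediate expression $|A|k_1k_2r^2/N^4$ should read $|A|k_1k_2/(r^2N^4)$ (your final value $x^{5/2}/Q^6$ is the correct one), and the paper simply takes the differencing parameters to be powers of $\log x$ rather than optimizing them, since only a $(\log x)^{-C}$ saving is required.
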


\begin{lemma}[Exponential sum estimate for large $Q$]\label{lem:sigma exp sum, Q bigger than 5 12}
Let $\epsilon > 0$ be sufficiently small, and let $x$ be sufficiently large depending on $\epsilon$. Let $r$ be an integer satisfying $x^{1/4-\epsilon}\leq r \leq x^{1/4+\epsilon}$, and let $Q$ satisfy $x^{5/12+100\epsilon}\ll Q \ll x^{1-\epsilon}$. Let $\alpha(n)$ denote the indicator function of $n$ having a prime divisor in the interval $(x^{\epsilon^3},x^{\epsilon^2}]$, and let $1\leq |h| \leq (\log x)^{100}$ be an integer. Then for any integers $v_m$ with $|v_m| \leq r$ we have
\begin{align*}
\sum_{\substack{m\asymp x/Q \\ \alpha(m)=1}} \left|\sum_{\substack{\ell \asymp Q/r}} e\left(h \left(\frac{\sigma_4(m)}{m^2}(2v_mr\ell + r^2 \ell^2+(v_m+r\ell)^{-2}) + \frac{\sigma_4(m)}{m^3}r\ell + \frac{m\ell}{r^3} \right) \right) \right| \ll_{C,\epsilon} \frac{x}{r (\log x)^C}.
\end{align*}
\end{lemma}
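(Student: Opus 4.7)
The strategy parallels the proof of Lemma~\ref{lem:sigma 4 exp sum, Q less than 5 12}, in that Weyl--van der Corput differencing on the inner $\ell$-sum converts its quadratic phase into a linear one, but the longer $\ell$-sum makes the direct application insufficient. Instead we exploit the extra averaging supplied by the factorisation of $m$ guaranteed by $\alpha(m)=1$. Write $S(m) = \sum_{\ell \asymp Q/r} e(\phi_m(\ell))$ and set $N = Q/r$. Since $\alpha(m)=1$, we may write $m = m'q$ with $q$ a prime in $(x^{\epsilon^3}, x^{\epsilon^2}]$; partitioning $q$ dyadically as $q \asymp Q'$ loses only a $\log x$, so it suffices to show $\sum_{q \asymp Q'} \sum_{m' \asymp x/(QQ')} |S(m'q)| \ll x/(rQ'(\log x)^{C+1})$ for each $Q'$.

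First I would Cauchy--Schwarz in the $(m',q)$ variables and then apply Weyl--van der Corput differencing to each $|S(m'q)|^2$ with shift parameter $K = \lfloor (\log x)^{2C+10}\rfloor$. Using the multiplicativity
\[
\frac{\sigma_4(m'q)}{(m'q)^2} = \frac{\sigma_4(m')}{m'^2}(q^2+q^{-2}),
\]
the leading $\ell$-dependent term of the differenced phase becomes $2hr^2 k A_{m'}(q^2+q^{-2})\ell$, where $A_{m'} := \sigma_4(m')/m'^2$. The remaining $\ell$-dependent pieces come from differences of $(v_m + r\ell)^{-2}$, which are smooth with small total variation in $\ell$ and can be stripped off via partial summation.

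The geometric series in $\ell$ then yields a bound $\ll \min(N, \|2hr^2k A_{m'}(q^2+q^{-2})\|^{-1})$ for the differenced sum, up to negligible errors. Removing the primality of $q$ by an upper-bound sieve, the remaining average reduces to $\sum_{q\asymp Q'} \min(N, \|\alpha(q^2+q^{-2})\|^{-1})$ with $\alpha = 2hr^2 k A_{m'}$. Since $\alpha Q' \asymp x^{5/2}/(Q^2 Q')$ is comfortably larger than $1$ in the regime $Q \gg x^{5/12+100\epsilon}$ and $Q' \leq x^{\epsilon^2}$, the Erd\H{o}s--Tur\'an inequality (or an explicit Weyl-type discrepancy bound) gives $\ll (Q' + N)(\log x)^{O(1)}$. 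Summing over $m'$, $k$, and $Q'$, and then unwinding the Cauchy--Schwarz, produces the target bound for any fixed $C$.

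The main obstacle will be making the discrepancy estimate in the previous paragraph quantitative, with a polylogarithmic saving uniform in $\alpha$, $m'$, and $k$, while keeping track of all the error terms arising from the $(v_m + r\ell)^{-2}$ perturbation and the sieved-out primality of $q$. The threshold $Q \gg x^{5/12+100\epsilon}$ enters at exactly this point: it ensures the $q$-derivative of the phase is large enough to drive the discrepancy argument, which would otherwise fail in the complementary range treated by Lemma~\ref{lem:sigma 4 exp sum, Q less than 5 12}.
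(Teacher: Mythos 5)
Your overall architecture is sound and genuinely different from the paper's in its second half, though both routes ultimately rest on the same engine. The paper also begins by removing $v_m$ and performing one round of differencing in $\ell$ (shift-and-average plus Cauchy--Schwarz), but it then keeps the entire differenced bracket in the phase, extracts the prime $t\in(x^{\epsilon^3},x^{\epsilon^2}]$ from $m$, applies Cauchy--Schwarz in $t$, and bounds the resulting sum $\sum_{n\asymp T}e\bigl(A_{\ell_1,\ell_2}(n^2+n^{-2})\bigr)$ by Lemma \ref{lem:sigma1 exp sum}. You instead complete the $\ell$-sum as a (perturbed) geometric series to get $\min\bigl(N,\|\beta_q\|^{-1}\bigr)$ with $\beta_q=2hr^2kA_{m'}(q^2+q^{-2})$, and then run Erd\H{o}s--Tur\'an in $q$; the exponential sums $\sum_q e(H\beta_q)$ that Erd\H{o}s--Tur\'an demands are again exactly of the shape of Lemma \ref{lem:sigma1 exp sum}, and the size checks ($|H\beta|\geq r^2(x/QQ')^2\gg Q'^5$ and $\leq Q'^{U(\epsilon)}$) go through. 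Your route trades the paper's second Cauchy--Schwarz for a discrepancy estimate; it is slightly longer to make quantitative but is a legitimate alternative.

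Two points need repair. First, the claim that the differenced $(v_m+r\ell)^{-2}$ contribution has ``small total variation'' and can be ``stripped off via partial summation'' is false as stated: its total variation over $\ell\asymp Q/r$ is $\asymp |hk|\,rx^2/Q^5$, which at the bottom of the range $Q\asymp x^{5/12+100\epsilon}$ is about $x^{1/6-O(\epsilon)}$, so naive partial summation costs a factor of that size. What is actually small is its \emph{derivative} in $\ell$, namely $\asymp |hk|\,r^2x^2/Q^6\ll x^{-c\epsilon}$; you should therefore either apply the Kusmin--Landau first-derivative test to the full phase $\beta_q\ell+g(\ell)$ (getting $\min(N,\|\beta_q\|^{-1})$ outside an exceptional set $\|\beta_q\|\ll x^{-c\epsilon}$), or note that the variation $V$ satisfies $V\ll Nx^{-c\epsilon}$ so the partial-summation loss is still affordable after the discrepancy step. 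Second, and relatedly, your diagnosis of where the threshold $Q\gg x^{5/12+100\epsilon}$ enters is wrong: the $q$-derivative of the phase, of size $\asymp r^2x^2/(Q^2Q')$, is enormous throughout the entire range $Q\ll x^{1-\epsilon}$ and is never the obstruction. The threshold is needed precisely to make the $v_m$-dependent and $(v_m+r\ell)^{-2}$-type correction terms (the quantities $\asymp x^2r^2/Q^6$) negligible --- this is the step you glossed over --- exactly as in the paper's removal of $v_m$ at the start of its proof.
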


\begin{proof}[Proof of Proposition \ref{prop:sigma 4 is small} assuming Lemmas \ref{lem:sigma 4 exp sum, Q less than 5 12} and \ref{lem:sigma exp sum, Q bigger than 5 12}]
Let $\alpha(p+1)$ be the indicator function of $p+1$ having a prime divisor in $(x^{\epsilon^3},x^{\epsilon^2}]$. By slightly adapting the proof of Lemma \ref{lem:sigma 1 p plus 1 has a convenient factor} one shows that the terms in $\Sigma_4$ with $\alpha(p+1)=0$ have a total contribution of
\begin{align*}
&\ll \epsilon^2\frac{W^2}{\varphi(W)^3} \frac{x}{(\log x)^2 \log\log x}.
\end{align*}
Therefore, we may assume we sum over $p$ in $\Sigma_4$ such that $\alpha(p+1)=1$.

We change variables $p+1= mq$, where $q$ is a prime $\gg x^{3/10}$; the condition $\alpha(p+1)=1$ is equivalent to $\alpha(m)=1$. We then break the sum over $q$ into dyadic segments $q \asymp Q$. Therefore, up to errors of size $\ll\epsilon^2\frac{W^2}{\varphi(W)^3} \frac{x}{(\log x)^2 \log\log x}$ we have
\begin{align*}
\Sigma_4 &\leq \sum_{x^{3/10}\ll Q=2^j\ll x/W}\sum_{x^{1/4-\epsilon} < r \leq x^{1/4}(\log x)^{100}} \sum_{\substack{m\asymp x/Q \\ m\equiv 0 (W) \\ (m,r)=1 \\ \alpha(m)=1}} \sum_{\substack{q \asymp Q\\ mq+1 \equiv 0 (r) \\ P^-(mq-1) > x^{1/100} \\ P^-(mq+1) > x^{1/100} \\ P^-(\frac{mq+2}{2}) > (\log x)^{100} \\ \|\frac{\sigma_4(mq)}{mq (mq-1)} + \frac{mq}{r^4} + \frac{1}{16} \| \leq (\log x)^{-100}}} 1.
\end{align*}

The last step before proceeding to exponential sums is to eliminate some inconvenient ranges of $Q$. For fixed $Q,r$, and $m$, we use upper-bound sieves to see that the sum over primes $q$ is
\begin{align*}
&\leq \sum_{\substack{n \asymp Q \\ n\equiv -\overline{m} (r)}}\Big(\sum_{\substack{d_1 \mid P((\log x)^{100}) \\ \omega(d_1)\leq 2R \\ d_1 \mid n}} \mu(d_1) \Big)\Big(\sum_{\substack{d_2 \mid P((\log x)^{100}, x^{1/100}) \\ d_2 \leq D \\ d_2 \mid n}}\lambda_{d_2}^+ \Big) \\
&\times\Big(\sum_{\substack{e_1 \mid P((\log x)^{100}) \\ \omega(e_1)\leq 2R \\ e_1 \mid mn-1 \\ (e_1,W)=1}} \mu(e_1) \Big)\Big(\sum_{\substack{e_2 \mid P((\log x)^{100}, x^{1/100}) \\ e_2 \leq D \\ e_2 \mid mn-1}}\lambda_{e_2}^+ \Big)\Big( \sum_{\substack{f \mid P(x^{1/100}) \\ f \leq D \\ f \mid mn+1 \\(f,W)=1}}\lambda_f^+\Big)\Big(\sum_{\substack{g \mid P((\log x)^{100}) \\ \omega(g)\leq 2R \\ g \mid mn+2 \\ (g,W)=1}} \mu(g) \Big),
\end{align*}
where $D = x^{1/80}$ and $R = \lfloor (\log \log x)^2 \rfloor$. This is similar to, and only slightly more complicated than, sieve sums we have already encountered. We therefore skip the details and just record that this sum over $n$ is
\begin{align*}
&\ll \frac{W^3}{\varphi(W)^3} \frac{Q}{r (\log x)^3 \log \log x}.
\end{align*}
It follows the contribution to $\Sigma_4$ from $Q$ which satisfy $x^{5/12-100\epsilon} \ll Q \ll x^{5/12+100\epsilon}$ is
\begin{align*}
\ll\epsilon^2\frac{W^2}{\varphi(W)^3} \frac{x}{(\log x)^2 \log\log x}.
\end{align*}
The contribution from $Q \gg x^{1-\epsilon}$ is similarly negligible, so we may assume $Q \ll x^{1-\epsilon}$.

For the remaining ranges of $Q$ we may impose the condition that $(m,q)=1$ by trivial estimation, then use multiplicativity to write $\sigma_4(mq) = \sigma_4(m)(q^4+1)$, and then drop the coprimality condition and the condition that $q$ is prime by positivity. Therefore
\begin{align*}
\Sigma_4 \ll \epsilon^2\frac{W^2}{\varphi(W)^3} \frac{x}{(\log x)^2 \log\log x} &+ (\log x) \max_{\substack{x^{3/10} \ll Q \ll x^{1-\epsilon} \\ Q \not \in [x^{5/12-100\epsilon},x^{5/12+100\epsilon}]}}\sum_{x^{1/4-\epsilon} < r \leq x^{1/4}(\log x)^{100}} \sum_{\substack{m\asymp x/Q \\ \alpha(m)=1}} \\ 
&\times\sum_{\substack{n \asymp Q\\ n\equiv -\overline{m} (r) \\ \|\frac{\sigma_4(m)}{m^2}(n^2+n^{-2}) + \frac{\sigma_4(m)}{m^3}n + \frac{\sigma_4(m)}{m^4}+ \frac{mn}{r^4} + \frac{1}{16} \| \leq (\log x)^{-50}}} 1.
\end{align*}
We introduce a smooth function $w$ as in the proof of Proposition \ref{prop:sigma 1 is small}, and then expand $w$ in its Fourier series. The contribution of the zero frequency is $\ll x(\log x)^{-40}$, the contribution of the frequencies $|h| > (\log x)^{60}$ is negligible, and the contribution of the intermediate frequencies is
\begin{align*}
\ll (\log x)^{O(1)}\sum_{x^{1/4-\epsilon} < r \leq x^{1/4}(\log x)^{100}} \sum_{\substack{m\asymp x/Q \\ \alpha(m)=1}}\Big|\sum_{\substack{n \asymp Q \\ n\equiv -\overline{m} (r)}} e\left(h \left(\frac{\sigma_4(m)}{m^2}(n^2+n^{-2}) + \frac{\sigma_4(m)}{m^3}n + \frac{mn}{r^4} \right) \right) \Big|
\end{align*}
for some $x^{3/10}\ll Q \ll x^{1-\epsilon}$ which avoids $x^{5/12}$ and some integral $h$ satisfying $1\leq |h| \leq (\log x)^{60}$.

We handle the congruence condition on $n$ by changing variables $n = v_m+r\ell$, where $v_m = v(m,r)$ is an integer satisfying $|v_m| \leq r$. Up to an error of size $O(x^{1-\epsilon})$ the contribution of the intermediate frequencies is
\begin{align*}
&\ll (\log x)^{O(1)}\sum_{x^{1/4-\epsilon} < r \leq x^{1/4}(\log x)^{100}} \sum_{\substack{m\asymp x/Q \\ \alpha(m)=1}} \\
&\times \left|\sum_{\substack{\ell \asymp Q/r}} e\left(h \left(\frac{\sigma_4(m)}{m^2}(2v_mr\ell + r^2 \ell^2+(v_m+r\ell)^{-2}) + \frac{\sigma_4(m)}{m^3}r\ell + \frac{m\ell}{r^3} \right) \right) \right|.
\end{align*}

If $Q \ll x^{5/12-100\epsilon}$ then Lemma \ref{lem:sigma 4 exp sum, Q less than 5 12} shows the sum over $\ell$ has size $\ll_A \frac{Q}{r}(\log x)^{-A}$ for any large fixed $A$, and this is acceptably small. We may therefore assume $Q \gg x^{5/12+100\epsilon}$. In this case, Lemma \ref{lem:sigma exp sum, Q bigger than 5 12} shows that the double sum over $m$ and $\ell$ has size $\ll_A \frac{x}{r}(\log x)^{-A}$ for any large, fixed $A$. It follows that
\begin{align*}
\Sigma_4 &\ll_A \epsilon^2\frac{W^2}{\varphi(W)^3} \frac{x}{(\log x)^2 \log\log x} + \frac{x}{(\log x)^A}
\end{align*}
and this is sufficient for Proposition \ref{prop:sigma 4 is small}.
\end{proof}

\begin{proof}[Proof of Lemma \ref{lem:sigma 4 exp sum, Q less than 5 12}]
The proof is very similar to the proof of Lemma \ref{lem:sigma1 exp sum}. If we let $E$ denote the sum over $\ell$ we wish to bound then two rounds of Weyl-van der Corput differencing as in the proof of Lemma \ref{lem:sigma1 exp sum} give
\begin{align*}
|E|^4 \ll \frac{Q^4}{r^4 J^2} + \frac{Q^4}{r^4 K} &+ \frac{Q^3}{r^3}\Big|\sum_{\ell \asymp Q/r} e\Big(h \frac{\sigma_4(m)}{m^2}((v+r(\ell+j+k))^{-2} \\
&-(v+r(\ell+k))^{-2}- (v+r(\ell+j))^{-2}+(v+r\ell)^{-2}) \Big)\Big|
\end{align*}
for integers $j$ and $k$ satisfying $1\leq |j| \leq J =\lfloor (\log x)^{C_1}\rfloor$, $1\leq |k| \leq K = \lfloor (\log x)^{C_2}\rfloor$. Here $C_1$ and $C_2$ are sufficiently large, fixed integers. If we let $\Phi(\ell)$ denote the phase function inside the exponential then
\begin{align*}
\Phi(\ell) = 6hr^2\frac{\sigma_4(m)}{m^2}\int_0^j \int_0^k \frac{1}{(v+r(\ell+s+t))^4} ds \ dt
\end{align*}
and therefore
\begin{align*}
\left|\Phi^{(n)}(\ell) \right| &\asymp_n |hjk| \frac{x^2}{Q^2} r^2 \frac{1}{r^4 \ell^4} \ell^{-n} \asymp_n |hjk| \frac{x^2r^2}{Q^6} (Q/r)^{-n}.
\end{align*}
Since $Q \ll x^{5/12-100\epsilon}$ we see that $|hjk| \frac{x^2r^2}{Q^6} \gg x^{10\epsilon}$, say, and since $Q \gg x^{3/10}$ we have by \cite[Theorem 8.4]{IK2004} that
\begin{align*}
\left|\sum_{\ell \asymp Q/r} e(\Phi(\ell)) \right| \ll_\epsilon (Q/r)^{1-c},
\end{align*}
where $c$ is a positive constant which depends on $\epsilon, C_1$, and $C_2$. This completes the proof.
\end{proof}

\begin{proof}[Proof of Lemma \ref{lem:sigma exp sum, Q bigger than 5 12}]
Let $\Upsilon$ denote the sum over $m$ and $\ell$ which we wish to bound.

We need to take advantage of the averaging over $m$, but the fact that the integer $v_m$ depends on $m$ at first prevents this. Our first task, then, is to remove $v_m$.

By series expansion we have
\begin{align*}
\frac{1}{(v_m+r\ell)^2} = \frac{1}{r^2\ell^2}-\frac{2v_m}{r^3\ell^3} + O \left(\frac{v_m^2}{r^4 \ell^4} \right),
\end{align*}
and
\begin{align*}
\frac{\sigma_4(m)}{m^2} \frac{v_m^2}{r^4 \ell^4} \ll \frac{x^2}{Q^2} \frac{1}{r^2 \ell^4} \ll \frac{x^2r^2}{Q^6}.
\end{align*}
Since $r\ll x^{1/4+\epsilon}$ and $Q \gg x^{5/12+100\epsilon}$ we see this is $O(x^{-10\epsilon})$, say, so up to an error of size $\ll \frac{x^{1-\epsilon}}{r}$ we have
\begin{align*}
|\Upsilon| &= \sum_{\substack{m\asymp x/Q \\ \alpha(m)=1}} \Big|\sum_{\substack{\ell \asymp Q/r}} e\left(h \left(\frac{\sigma_4(m)}{m^2}(2v_mr\ell + r^2 \ell^2+\frac{1}{r^2\ell^2} - \frac{2v_m}{r^3\ell^3}) + \frac{\sigma_4(m)}{m^3}r\ell + \frac{m\ell}{r^3} \right) \right) \Big|.
\end{align*}
Set $J = \lfloor (\log x)^{B}\rfloor$ with $B$ sufficiently large, then shift the sum over $\ell$ by $j$ and average over $1\leq j \leq J$. This gives
\begin{align*}
|\Upsilon| &\ll \frac{xJ}{Q} + \frac{1}{J}\sum_{\substack{m\asymp x/Q \\ \alpha(m)=1}}\sum_{\ell \asymp Q/r} \\ 
&\Big|\sum_{1\leq j \leq J}e\left(h \left(\frac{\sigma_4(m)}{m^2}(2v_mrj + r^2 (\ell+j)^2+\frac{1}{r^2(\ell+j)^2} - \frac{2v_m}{r^3(\ell+j)^3}) + \frac{\sigma_4(m)}{m^3}rj + \frac{mj}{r^3} \right) \right) \Big|.
\end{align*}
We then apply Cauchy-Schwarz, interchange the order of summation, and change variables to obtain
\begin{align*}
|\Upsilon|^2 &\ll \frac{x^2}{r^2 J} + \frac{x}{rJ}\sum_{1\leq |j| \leq J} \sum_{\substack{m\asymp x/Q \\ \alpha(m)=1}}\Big| \sum_{\ell \asymp Q/r} e\left(h\frac{\sigma_4(m)}{m^2}\left(2jr^2\ell + \frac{1}{r^2(\ell+j)^2} - \frac{1}{r^2\ell^2} +O\left( \frac{|jv_m|}{r^3\ell^4}\right)\right) \right)\Big|.
\end{align*}
Since $Q \gg x^{5/12+100\epsilon}$ and $|v_m|\leq r$ we have
\begin{align*}
h\frac{\sigma_4(m)}{m^2} \cdot \frac{j|v_m|}{r^3\ell^4} \ll (\log x)^{O(1)} \frac{x^2r^2}{Q^6} \ll x^{-10\epsilon},
\end{align*}
and therefore
\begin{align*}
|\Upsilon|^2 &\ll \frac{x^2}{r^2 J} + \frac{x}{r}\sum_{\substack{m\asymp x/Q \\ \alpha(m)=1}}\Big| \sum_{\ell \asymp Q/r} e\left(h\frac{\sigma_4(m)}{m^2}\left(2jr^2\ell + \frac{1}{r^2(\ell+j)^2} - \frac{1}{r^2\ell^2}\right) \right)\Big|
\end{align*}
for some $1\leq |j| \leq J$. We have therefore eliminated $v_m$ from the sum over $\ell$.

We now use the condition $\alpha(m)=1$, which we recall means $m$ has a prime divisor in the interval $(x^{\epsilon^3},x^{\epsilon^2}]$. We change variables $m \rightarrow m t$, where $t$ is the prime divisor in question. After dyadically decomposing the range of $t$ we obtain
\begin{align*}
|\Upsilon|^2 & \ll \frac{x^2}{r^2 J} + \frac{x(\log x)}{r}\max_{\substack{x^{\epsilon^3} \ll T \ll x^{\epsilon^2}}}\mathop{\sum\sum}_{\substack{m\asymp x/QT \\ t \asymp T}}\left| \sum_{\ell \asymp Q/r} e\left(h\frac{\sigma_4(mt)}{(mt)^2}\left(2jr^2\ell + \frac{1}{r^2(\ell+j)^2} - \frac{1}{r^2\ell^2}\right) \right)\right|.
\end{align*}
The contribution from $t \mid m$ is negligible, so we may assume $(t,m)=1$ and therefore $\frac{\sigma_4(mt)}{(mt)^2} = \frac{\sigma_4(m)}{m^2}(t^2+t^{-2})$. With $m$ and $t$ separated we may drop the condition that they are coprime, and we may also drop the condition that $t$ is prime. This yields
\begin{align*}
|\Upsilon|^2  \ll \frac{x^2}{r^2 J} &+ \max_{\substack{x^{\epsilon^3} \ll T \ll x^{\epsilon^2} \\ m\asymp x/QT}}\frac{x^2(\log x)}{QTr}\mathop{\sum}_{\substack{n \asymp T}} \\
&\times\Big| \sum_{\ell \asymp Q/r} e\left(h\frac{\sigma_4(m)}{m^2}(n^2+n^{-2})\left(2jr^2\ell + \frac{1}{r^2(\ell+j)^2} - \frac{1}{r^2\ell^2}\right) \right)\Big|.
\end{align*}
By the Cauchy-Schwarz inequality
\begin{align*}
\Big(\sum_{n\asymp T}\Big|\sum_{\ell \asymp Q/r} \Big|\Big)^2 &\ll \frac{QT^2}{r} + T\mathop{\sum\sum}_{\substack{\ell_1, \ell_2 \asymp Q/r \\ \ell_1 \neq \ell_2}}\Big|\sum_{n\asymp T} e \left(A_{\ell_1,\ell_2}(n^2+n^{-2}) \right) \Big|,
\end{align*}
where $A_{\ell_1,\ell_2}$ is a real number satisfying $|A_{\ell_1,\ell_2}| \asymp |hj| \frac{x^2 r^2}{Q^2 T^2} |\ell_1-\ell_2|$. Since $x^{\epsilon^3} \ll T \ll x^{\epsilon^2}$ and $x^{1/3} \ll |A_{\ell_1,\ell_2}| \ll x^2$ for $\ell_1 \neq \ell_2$, Lemma \ref{lem:sigma1 exp sum} implies
\begin{align*}
\left|\sum_{n\asymp T} e \left(A_{\ell_1,\ell_2}(n^2+n^{-2}) \right) \right| \ll_\epsilon T^{1-c}
\end{align*}
for some positive constant $c$. This completes the proof.
\end{proof}

\bibliographystyle{amsalpha}

\end{document}